\newcommand{\End}{\text{\rm End}}
\newcommand{\thmref}[1]{Theorem~\ref{#1}}
\newcommand{\secref}[1]{\S\ref{#1}}
\newcommand{\lemref}[1]{Lemma~\ref{#1}}
\newcommand{\eqnref}[1]{~(\ref{#1})}
\newcommand{\germ}{\mathfrak}
\newtheorem{thm}{Theorem}[section]
\newtheorem{lem}[thm]{Lemma}
\theoremstyle{definition}
\newtheorem{prop}[thm]{Proposition}
\theoremstyle{rem}
\numberwithin{equation}{section}
\keywords{Virasoro algebra,  Three Point Algebras, Affine Lie 
Algebras, free field representation}
\address{Department of Mathematics,
The College of Charleston,
Charleston SC 29424}
\email{coxbl@cofc.edu, jurisiche@cofc.edu}
\address{Institute of Science and Tecnology,
Federal University of Sao Paulo,
Sao Jose dos Campos SP 12247014, Brazil}
\email{martins.renato@unifesp.br}
\title{The 3-point Virasoro algebra  and its action on a Fock space}
\author{Ben Cox}\author{Elizabeth Jurisich}\author{Renato A. Martins}
\begin{document}

\begin{abstract}
We define a 3-point Virasoro algebra, and construct a representation of it on a previously defined Fock space for the 3-point affine algebra $\mathfrak{sl}(2, \mathcal R)  \oplus\left( \Omega_{\mathcal R}/d{\mathcal R}\right)$. \end{abstract}

\maketitle
 
\section{Introduction}  

Previous work of Kassel and Loday (see \cite{MR694130}, and \cite{MR772062}) show that if $R$ is a commutative algebra and $\mathfrak g $ is a simple Lie algebra, both defined over the complex numbers, then the universal central extension $\hat{{\mathfrak g}}:=\widehat{{\mathfrak g}(R)}$ of ${\mathfrak g}\otimes R$ is the vector space $\widehat{\mathfrak{g}(R)}:=\left({\mathfrak g}\otimes R\right)\oplus \Omega_R^1/dR$ where $\Omega_R^1/dR$ is the space of K\"ahler differentials modulo exact forms (see \cite{MR772062}).  More precisely the vector space $\hat{{\mathfrak g}}$ is made into a Lie algebra by defining
$$
[x\otimes f,y\otimes g]:=[xy]\otimes fg+(x,y)\overline{fdg},\quad [x\otimes f,\omega]=0
$$
for all $x,y\in\mathfrak g$, $f,g\in R$,  $\omega\in \Omega_R^1/dR$.  In the above $(-,-)$ denotes the Killing form on $\mathfrak g$ and $\overline{a}$ denotes the congruence class of $a\in\Omega^1_R$ modulo $dR$.    A natural question
is whether there exists free field or Wakimoto type realizations of these algebras.  From the work of Wakimoto, and Feigin and Frenkel  the answer is known when $R=\mathbb C[t^{\pm 1}]$ is the ring of Laurent polynomials in one variable (see \cite{W} and \cite{MR92f:17026}). Before describing the current work, we review a few other rings $R$ for which there is a known free field type realization.
  
  The initial motivation for the use of Wakimoto's realization was to prove a conjecture of Kac and Kazhdan involving the character of certain irreducible representations of affine Kac-Moody algebras at the critical level (see \cite{W} and \cite{MR2146349}). Another motivation for constructing such free field realizations is that they have been used to describe integral solutions to the Knizhnik-Zamolodchikov equations (see for example \cite{MR1077959} and \cite{MR1629472} and their references).  A third is that they are used in determining the center of a certain completion of the enveloping algebra of an affine Lie algebra at the critical level, which is an important ingredient in the geometric Langland's correspondence \cite{MR2332156}.  Yet another motivation is that Wakimoto realizations of an affine Lie algebra appear naturally in the context of the generalized AKNS hierarchies \cite{MR1729358}.

   In Kazhdan and Luszig's explicit study of the tensor structure of modules for affine Lie algebras (see \cite{MR1186962} and \cite{MR1104840}) the ring of functions on the Riemann sphere regular everywhere except at a finite number of points appears naturally.   This algebra is called by Bremner the {\it $n$-point algebra}.  One can find in the book  \cite[Ch. 12]{MR1849359} algebras of the form $\oplus _{i=1}^n\mathfrak g((t-x_i))\oplus\mathbb Cc$ appearing in the description of the conformal blocks.  These contain the $n$-point algebras $ \mathfrak g\otimes \mathbb C[t, (t-x_1)^{-1},\dots, (t-x_N)^{-1}]\oplus\mathbb Cc$ modulo part of the center $\Omega_R/dR$. In \cite{MR1261553} Bremner explicitly described the universal central extension of such an algebra in terms of a basis.   In \cite{MR3211093} the authors give an explicit description of the two cocyles and hence the universal central extension of what is called the $n$-point Virasoro algebra and its action on modules of densities.    The {\it $4$-point ring} is $R=R_a=\mathbb C[s,s^{-1},(s-1)^{-1},(s-a)^{-1}]$ where $a\in\mathbb C\backslash\{0,1\}$.    Set $S:=S_b=\mathbb C[t,t^{-1},u]$ where $u^2=t^2-2bt+1$ with $b$ a complex number not equal to $\pm 1$.  After observing $R_a\cong S_b$;
  Bremner gave an explicit  description of the universal central extension of $\mathfrak g\otimes S_b$, in terms of ultraspherical (Gegenbauer) polynomials (see \cite{MR1249871}).     The first author of this present article gave in  \cite{MR2373448} a realization of the four point algebra in terms of infinite sums of partial differential operators  acting on a polynomial ring in infinitely many variables and where the center acts nontrivially.  See also \cite{MR1303073}), \cite{FailS}, \cite{MR2183958} and \cite{MR2541818}) for work on other rings besides the $n$-point algebras.

 Below we study the three point algebra case where $R$ denotes the ring of rational functions with poles only in the set $\{a_1,a_2,a_3\}$. This algebra is isomorphic to $\mathbb C[s,s^{-1},(s-1)^{-1}]$. Schlichenmaier has a somewhat different description of the three point algebra as having coordinate ring $\mathbb C[(z^2-a^2)^k,z(z^2-a^2)^k\,|\, k\in\mathbb Z]$ where $a\neq 0$ (see \cite{MR2058804}). 
In \cite{MR3245847} it was noted that $R\cong \mathbb C[t,t^{-1},u\,|\,u^2=t^2+4t]$, and thus the three point algebra resembles $S_b$ above.
 Besides Bremner's article mentioned above, other work on the universal central extension of $3$-point algebras can be found in \cite{MR2286073}.   This article is restricted to the representation theory of the affine $3$-point algebra and its algebra of derivations mainly to simplify calculations.

The main result of \cite{MR3245847}, reviewed below in \thmref{mainresult0} provides a natural free field realization in terms of a $\beta$-$\gamma$-system and the three point Heisenberg algebra, of the three point affine Lie algebra when $\mathfrak g=\mathfrak{sl}(2,\mathbb C)$.   Just as in the case of intermediate Wakimoto modules defined in
\cite{ MR2271362}, there are two different realizations given by a parameter $r=0,1$ of this action on a Fock space $\mathcal F$ depending on two different normal orderings.   When $r=1$ we get a free field realization and when $r=0$ we obtain a realization in terms of infinite sums of partial differential operators on polynomial rings in infinitely many variables.

In \secref{3pt2cocycle} we rewrite the two cocycles given in \cite{MR3211093} used to define the three point Virasoro algebra $\mathfrak V$,  using a basis of $R=\mathbb C[t^{\pm 1},u\,|\,u^2=t^2+4t]$ rather than a basis of $S=\mathbb C[s^{\pm 1},(s-1)^{-1}]$.  
The advantage of using the ring $R$ is that the generating fields and their relations for $\text{Der}\,(R)$ can be written in a fairly simple and compact fashion, see \eqnref{eezw}-\eqnref{dezw}.  One of the problems listed in \cite{MR1109216} is to describe the universal central extension of the three point Witt algebra which we give 
in \eqnref{phiee}-\eqnref{phidd}.     A variation of this is also given in \cite{MR3211093}.

The central result  of this current article, \thmref{mainresult}, provides a natural action of this three point Virasoro algebra $\mathfrak V$, on the realization for the three point current algebra $\widehat{\mathfrak{sl}_2(R)}$, given in \thmref{mainresult0}.  As for the current type algebra $\mathfrak{sl}_2(\mathbb C)\otimes R$ these realizations of  $\mathfrak V$ depend on a normal ordering parametrized by $r=0,1$.   The proof is based on Wick's Theorem and Taylor's Theorem given in the context of vertex operator algebras.  For the reader's convenience these theorems are stated in the appendix.   We conjecture that the semi-direct product of the three point Virasoro algebra with the three point current algebra acts on the free field realization provided $r=1$.    This semi-direct product can be thought of as a kind of gauge algebra and will be studied in a future paper.

 The simplest non-trivial example of a Krichever-Novikov algebra beyond an affine Kac-Moody algebra 
 (see \cite{MR902293}, \cite{MR925072}, \cite{MR998426}) is perhaps the  three point algebra.  On the other hand interesting and 
foundational work has be done by Krichever, Novikov, Schlichenmaier, and Sheinman on the representation theory of the Krichever-Novikov algebras. 
 In particular Wess-Zumino-Witten-Novikov theory and analogues of the Knizhnik-Zamolodchikov equations are developed for  these algebras 
(see the survey article \cite{MR2152962}, and for example \cite{MR2058804},  \cite{MR1989644}, \cite{MR1666274}, \cite{MR1706819}, and \cite{MR2072650}).  In the affine Lie algebra setting such as $\widehat{\mathfrak{sl}_2(\mathbb C)}$, the integral solutions are described in terms of hypergeometric functions.
 We plan to use the realization given by \thmref{mainresult0} and \thmref{mainresult} in future work to arrive at an explicit description of the corresponding Knizhnik-Zamolodchikov equations with the goal of providing integral solutions of these equations for the three point algebra.  

\section{Preliminary material  and Notation}

All vector spaces and algebras are over $\mathbb C$. All power series are formal series. 
\subsection{Formal Distributions}
We need recall notation that will simplify many of the arguments
made later.
This notation follows roughly \cite{MR99f:17033} and \cite
{MR2000k:17036}:  The {\it formal delta function}
$\delta(z/w)$ is the formal distribution
$$
\delta(z/w)=z^{-1}\sum_{n\in\mathbb Z}z^{-n}w^{n}=w^{-1}\sum_{n\in\mathbb Z}z^{n}w^{-n}.
$$
For any sequence of elements $\{a_{m}\}_{m\in
\mathbb Z}$ in the ring $\End (V)$, $V$ a vector space,  the
formal distribution
\begin{align*}
a(z):&=\sum_{m\in\mathbb Z}a_{(m)}z^{-m-1}
\end{align*}
is called a {\it field}, if for any $v\in V$, $a_{m}v=0$ for $m\gg0$.
If $a(z)$ is a field, then we set
\begin{align}\label{usualnormalordering}
    a(z)_-:&=\sum_{m\geq 0}a_{(m)}z^{-m-1},\quad\text{and}\quad
   a(z)_+:=\sum_{m<0}a_{(m)}z^{-m-1}.
\end{align}
 The {\it normal ordered product} of two distributions
$a(z)$ and
$b(w)$ (and their coefficients) is
defined by
\begin{equation}\label{normalorder}
\sum_{m\in\mathbb Z}\sum_{n\in\mathbb
Z}:a_{(m)}b_{(n)}:z^{-m-1}w^{-n-1}=:a(z)b(w):=a(z)_+b(w)+b(w)a(z)_-.
\end{equation}

Now we should point out that while $:a^1(z_1)\cdots a^m(z_m):$
is always defined as a formal series, we will only define $:a(z)
b(z)::=\lim_{w\to z}:a(z)b(w):$ 
for certain pairs
$(a(z),b(w))$.  

Then one defines recursively
\[
:a^1(z_1)\cdots a^k(z_k):=:a^1(z_1)\left(:a^2(z_2)\left(:\cdots
:a^{k-1}(z_{k-1}) a^k(z_k):\right)\cdots
:\right):,
\]
while normal ordered product
\[
:a^1(z)\cdots
a^k(z):=\lim_{z_1,z_2,\cdots, z_k\to
z} :a^1(z_1)\left(:a^2(z_2)\left(:\cdots :a^{k-1}(z_{k-1})
a^k(z_k):\right)\cdots
\right):
\]
will only be defined for certain $k$-tuples $(a^1,\dots,a^k)$.

Let 
\begin{equation}\label{contraction}
\lfloor
ab\rfloor=a(z)b(w)-:a(z)b(w):= [a(z)_-,b(w)],
\end{equation}
(half of
$[a(z),b(w)]$) denote the {\it contraction} of any two formal distributions 
$a(z)$ and $b(w)$. Note that the the variables $z$,$w$ are usually suppressed in this notation, when no confusion will arise.  

 \section{Oscillator algebras}\label{oscilllatoralg}
 \subsection{The $\beta-\gamma$ system}   The following construction in the physics literature is often called the $\beta-\gamma$ system which corresponds to our $a$ and $a^*$ below.
 Let $\hat{\mathfrak a}$ be the infinite dimensional oscillator algebra with generators $a_n,a_n^*,a^1_n,a^{1*}_n,\,n\in\mathbb Z$ together with $\mathbf 1$ satisfying the relations 
\begin{gather*}
[a_n,a_m]=[a_m,a_n^1]=[a_m,a_n^{1*}]=[a^*_n,a^*_m]=[a^*_n,a^1_m]=[a^*_n,a^{1*}_{m}]=0,\\
[a_n^{1},a_m^{1}]=[a_n^{1*},a_m^{1*}]=0=[\mathfrak a,\mathbf 1], \\
[a_n,a_m^*]=\delta_{m+n,0}\mathbf 1=[a^1_n,a_m^{1*}].
\end{gather*}
For  $c=a,a^1$ and respectively $X=x,x^1$ with $r=0$ or $r=1$, we define $\mathbb C[\mathbf x]:= \mathbb C[x_n,x_n^1\,|\,n\in\mathbb Z]$ and $\rho:\hat{\mathfrak a}\to \mathfrak{gl}(\mathbb C[\mathbf x])$ by
\begin{align}
\rho_r( c_{m}):&=\begin{cases}
  \partial/\partial
X_{m}&\quad \text{if}\quad m\geq 0,\enspace\text{and}\enspace  r=0
\\ X_{m} &\quad \text{otherwise},
\end{cases}\label{c}
 \\
\rho_r(c_{m}^*):&=
\begin{cases}X_{-m} &\enspace \text{if}\quad m\leq
0,\enspace\text{and}\enspace r=0\\ -\partial/\partial
X_{-m}&\enspace \text{otherwise}. \end{cases}\label{c*}
\end{align}
and $\rho_r(\mathbf 1)=1$.
These two representations can be constructed using induction:
For $r=0$ the representation 
$\rho_0$ is the
$\hat{\mathfrak a}$-module generated by $1=:|0\rangle$, where
$$
a_{m}|0\rangle=a^1_{m}|0\rangle=0,\quad m\geq  0,
\quad a_{m}^*|0\rangle= a_{m}^{1*}|0\rangle=0,\quad m>0.
$$
For $r=1$ the representation 
$\rho_1$ is the
$\hat{\mathfrak a}$-module generated by $1=:|0\rangle$, where
$$
\quad a_{m}^*|0\rangle= a_{m}^{1*}|0\rangle=0,\quad m\in\mathbb Z.
$$
If we define
\begin{equation}
 \alpha(z):=\sum_{n\in\mathbb Z}a_nz^{-n-1},\quad  \alpha^*(z):=\sum_{n\in\mathbb Z}a_n^*z^{-n}, \label{alpha}
\end{equation}
and
\begin{equation}
 \alpha^1(z):=\sum_{n\in\mathbb Z}a^1_nz^{-n-1},\quad  \alpha^{1*}(z):=\sum_{n\in\mathbb Z}a^{1*}_nz^{-n}, \label{alpha1}
\end{equation}
then 
\begin{align*}
[\alpha(z),\alpha(w)]&=[\alpha^*(z),\alpha^*(w)]=[\alpha^{1}(z),\alpha^{1}(w)]=[\alpha^{1*}(z),\alpha^{1*}(w)]=0  \\
[\alpha(z),\alpha^*(w)]&=[\alpha^1(z),\alpha^{1*}(w)]
    =\mathbf 1\delta(z/w).
\end{align*}
Observe that $\rho_1(\alpha(z))$ and 
$\rho_1(\alpha^1(z))$ are not fields whereas $\rho_r(\alpha^*(z))$ and $\rho_r(\alpha^{1*}(z))$
are always fields.   
Corresponding to these two representations there are two possible normal orderings:  For $r=0$ we use the usual normal ordering given by \eqnref{usualnormalordering} and for $r=1$ we define the {\it natural normal ordering} to be 
\begin{alignat*}{2}
\alpha(z)_+&=\alpha(z),\quad &\alpha(z)_-&=0 \\
\alpha^1(z)_+&=\alpha^1(z),\quad &\alpha^1(z)_-&=0 \\
\alpha^*(z)_+&=0,\quad &\alpha^*(z)_-&=\alpha^*(z), \\
\alpha^{1*}(z)_+&=0,\quad &\alpha^{1*}(z)_-&=\alpha^{1*}(z) ,
\end{alignat*}

This means in particular that for $r=0$ we get 
\begin{align}
\lfloor \alpha \alpha^*  \rfloor =\lfloor \alpha(z) ,\alpha^*(w) \rfloor
&=\sum_{m\geq 0} \delta_{m+n,0}z^{-m-1}w^{-n}
=\delta_-(z/w)
=\ 
\,\iota_{z,w}\left(\frac{1}{z-w}\right)\\
\lfloor \alpha^* \alpha \rfloor
&
=-\sum_{m\geq 1} \delta_{m+n,0}z^{-m}
w^{-n-1}
=-\delta_+(w/z)=\,\iota_{z,w}\left(\frac{1}{w-z}
\right)
\end{align}
(where $\iota_{z,w}$ denotes Taylor series expansion in the ``region'' $|z|>|w|$), 
and for $r=1$ 
\begin{align}
\lfloor \alpha ,\alpha^* \rfloor
&=[\alpha(z)_-,\alpha^*(w)]=0 \\
\lfloor \alpha^*  \alpha \rfloor
&=[\alpha^*(z)_-,\alpha(w)]=
-\sum_{\in\mathbb Z} \delta_{m+n,0}z^{-m}
w^{-n-1}
=- \delta(w/z),
\end{align}
where similar results hold for $\alpha^1(z)$.
Notice that in both cases we have
$$
[\alpha(z),\alpha^*(w)]=
\lfloor \alpha(z)\alpha^*(w)\rfloor-\lfloor\alpha^*(w) \alpha(z)\rfloor=\delta(z/w).
$$

Recall that the singular part of the {\it operator product
expansion}
$$
\lfloor
ab\rfloor=\sum_{j=0}^{N-1}\iota_{z,w}\left(\frac{1}{(z-w)^{j+1}}
\right)c^j(w)
$$
completely determines the bracket of mutually local formal
distributions $a(z)$ and $b(w)$.   (See Theorem \ref{Kacsthm} of the Appendix). One writes
$$
a(z)b(w)\sim \sum_{j=0}^{N-1}\frac{c^j(w)}{(z-w)^{j+1}}.
$$

\section{The $3$-point algebras.}\label{threepoint}

\subsection{Three point rings} There are at least four incarnations of the three point algebra each depending on the coordinate ring:
  Fix $0\neq a\in\mathbb C$.
Let \begin{align*}
\mathcal S&:=\mathbb C[s,s^{-1},(s-1)^{-1} ] ,\\
 \mathcal R&:=\mathbb C[t,t^{-1},u\,|\, u^2=t^2+4t], \\
 \mathcal A&:=\mathcal A_a=\mathbb C[(z^2-a^2)^k,z(z^2-a^2)^j\,|\,k,j\in\mathbb Z].
 \end{align*}
M. Bremner introduced the ring $\mathcal S$ and M. Schlichenmaier introduced $\mathcal A$ (see \cite{MR2058804}).  Variants of $\mathcal R$ were introduced by Bremner for elliptic and $4$-point algebras.
\begin{prop}[\cite{MR3245847}]
\begin{enumerate}
\item  The map $t\mapsto s^{-1}(s-1)^2$, and $u\mapsto s-s^{-1}$, defines an isomorphism of $\mathcal R$ and $\mathcal S$ .
\item The rings $\mathcal A$ and $\mathcal R$ are isomorphic.
\end{enumerate}
\end{prop}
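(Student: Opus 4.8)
The plan is to treat each part by writing down an explicit algebra homomorphism, checking it is well defined, and then exhibiting an explicit two-sided inverse on generators; the only point needing genuine care is that, since all three rings are localizations, every element that has been inverted must be sent to a unit.

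For part (1), I would first record the identity $s^{-1}(s-1)^2 = s + s^{-1} - 2$, so that the proposed assignment reads $t \mapsto s+s^{-1}-2$, $u \mapsto s-s^{-1}$. To see that this defines a homomorphism $\phi\colon \mathcal R \to \mathcal S$ I must check two things: that $\phi(t)$ is a unit of $\mathcal S$ (clear, since $s^{-1}(s-1)^2$ is a product of units, so $t^{-1}$ may be sent to $s(s-1)^{-2}$), and that the defining relation is preserved, i.e. $(s-s^{-1})^2 = (s^{-1}(s-1)^2)^2 + 4 s^{-1}(s-1)^2$. A direct expansion gives both sides equal to $s^{-2}(s-1)^2(s+1)^2$, so $\phi$ is a well-defined ring map. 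For invertibility I would construct $\psi\colon \mathcal S \to \mathcal R$ by solving the two defining equations for $s$: adding and subtracting $t = s+s^{-1}-2$ and $u = s-s^{-1}$ yields $2s = t + u + 2$ and $2s^{-1} = t - u + 2$, which suggests $\psi(s) = \tfrac12(t+u+2)$, $\psi(s^{-1}) = \tfrac12(t-u+2)$. The computation $\tfrac14\big((t+2)^2 - u^2\big) = 1$ (using $u^2 = t^2+4t$) confirms these are reciprocal units of $\mathcal R$, and a similar computation shows $\psi(s-1) = \tfrac12(t+u)$ is a unit with inverse $-\tfrac12(t-u)t^{-1}$, so $\psi$ is well defined. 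Finally I would verify $\psi\phi = \mathrm{id}$ and $\phi\psi = \mathrm{id}$ on the generators $t,u$ and $s,s^{-1},(s-1)^{-1}$ respectively; these are short algebraic checks that again only use $u^2 = t^2+4t$.

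For part (2) I would first simplify the presentation of $\mathcal A$. Writing $w = z^2 - a^2$, the relation $z^2 = w + a^2$ shows every product of two of the odd generators $z w^j$ lands in $\mathbb C[w,w^{-1}]$, so the algebra generated by the listed elements is $\mathbb C[w,w^{-1}] + z\,\mathbb C[w,w^{-1}]$; since $z$ and $w^{\pm 1}$ are themselves among the generators, this gives $\mathcal A = \mathbb C[z, (z^2-a^2)^{-1}]$, the ring of regular functions on $\mathbb P^1 \setminus \{a,-a,\infty\}$. I would then define $\theta\colon \mathcal S \to \mathcal A$ by $s \mapsto (a-z)/(2a)$ and check the unit conditions: $s-1 \mapsto -(a+z)/(2a)$, and both $a-z$ and $a+z$ are units of $\mathcal A$ since $1/(a-z) = -(a+z)(z^2-a^2)^{-1}$ and $1/(a+z) = -(a-z)(z^2-a^2)^{-1}$ both lie in $\mathcal A$. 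The inverse is given by $z \mapsto a(1-2s)$, under which $z^2 - a^2 \mapsto 4a^2 s(s-1)$ is a unit of $\mathcal S$; these two maps are visibly mutually inverse on generators, so $\mathcal A \cong \mathcal S$, and composing with the isomorphism of part (1) yields $\mathcal A \cong \mathcal R$.

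The computations here are entirely routine once the maps are written down; the only place an error could creep in is the verification of the unit conditions for the localized generators, so that is where I would concentrate attention. Conceptually, all three rings are coordinate rings of $\mathbb P^1$ with three points deleted, and part (2) is just the statement that the Möbius transformation $z \mapsto a(1-2s)$ carries $\{0,1,\infty\}$ to $\{a,-a,\infty\}$; the explicit maps above are a coordinate version of this observation.
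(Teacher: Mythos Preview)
Your argument is correct in both parts. The paper does not actually prove this proposition---it is quoted from \cite{MR3245847}---but later, around \eqnref{isom1}, it records exactly the inverse map $\phi(s)=\tfrac{t+2+u}{2}$, $\phi(s^{-1})=\tfrac{t+2-u}{2}$ that you construct as $\psi$, so your part (1) is precisely the expected verification. Your treatment of part (2), identifying $\mathcal A$ with $\mathbb C[z,(z^2-a^2)^{-1}]$ and then matching it with $\mathcal S$ via the M\"obius map $z\mapsto a(1-2s)$ sending $\{0,1,\infty\}$ to $\{a,-a,\infty\}$, is clean and correct; the paper offers no details for this part beyond the citation.
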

The fourth incarnation appears in the work of Benkart and Terwilliger given in terms of the tetrahedron algebra (see \cite{MR2286073}) . 
We will only work with $R=\mathcal R$.  
\subsection{The Universal Central Extension of the Current Algebra $\mathfrak g\otimes R$.}
 Suppose $R$ is a commutative algebra defined over $\mathbb C$.
Consider the left $R$-module  $F=R\otimes R$ with left action given by $f( g\otimes h ) = f g\otimes h$ for $f,g,h\in R$ and let $K$  be the submodule generated by the elements $1\otimes fg  -f \otimes g -g\otimes f$.   Then $\Omega_R^1=F/K$ is the module of {\it K\"ahler differentials}.  The element $f\otimes g+K$ is traditionally denoted by $fdg$.  The canonical map $d:R\to \Omega_R^1$ is given by $df  = 1\otimes f  + K$.  The {\it exact differentials} are the elements of the subspace $dR$.  The coset  of $fdg$  modulo $dR$ is denoted by $\overline{fdg}$.  C. Kassel proved that the universal central extension of the current algebra $\mathfrak g\otimes R$ where $\mathfrak g$ is a simple finite dimensional Lie algebra defined over $\mathbb C$, is the vector space $\hat{\mathfrak g}=(\mathfrak g\otimes R)\oplus \Omega_R^1/dR$ with Lie bracket given by
$$
[x\otimes f,Y\otimes g]=[xy]\otimes fg+(x,y)\overline{fdg},  [x\otimes f,\omega]=0,  [\omega,\omega']=0,
$$
  where $x,y\in\mathfrak g$, and $\omega,\omega'\in \Omega_R^1/dR$ and $(x,y)$  denotes the Killing  form  on $\mathfrak g$.

\begin{prop}[\cite{MR3245847}, \cite{MR1261553}, see also \cite{MR1249871}]\label{uce}  Let $\mathcal R$ be as above.  The set 
$$
\{\omega_0:=\overline{t^{-1} dt},\enspace \omega_1:=\overline{t^{-1}u\,dt}\}
$$
 is a basis of $\Omega_\mathcal R^1/d\mathcal R$.
\end{prop}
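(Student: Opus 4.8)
\emph{The plan.} I would realize $\Omega^1_{\mathcal{R}}/d\mathcal{R}$ concretely and argue in two stages: first that $\omega_0,\omega_1$ span it, then that they are linearly independent. Since $u^2=t^2+4t$ lies in $\mathbb{C}[t^{\pm1}]$, the ring is the free rank-two module $\mathcal{R}=\mathbb{C}[t^{\pm1}]\oplus\mathbb{C}[t^{\pm1}]u$, so $\Omega^1_{\mathcal{R}}$ is generated over $\mathcal{R}$ by $dt$ and $du$ subject to the single relation obtained by differentiating the defining equation, namely $u\,du=(t+2)\,dt$. Using this relation to replace every occurrence of $u\,du$ by $(t+2)\,dt$, one sees that every $1$-form is a finite $\mathbb{C}$-combination of the elements $t^n\,dt$, $t^n u\,dt$, and $t^n\,du$ with $n\in\mathbb{Z}$; the goal is to reduce this spanning set modulo $d\mathcal{R}$.

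\emph{Spanning.} Since $t^n\,dt=\tfrac{1}{n+1}\,d(t^{n+1})$ for $n\neq-1$, all $t^n\,dt$ with $n\neq-1$ are exact, while $t^{-1}\,dt=\omega_0$. From $d(t^n u)=n\,t^{n-1}u\,dt+t^n\,du\equiv0$ one obtains
\[
t^n\,du\equiv -\,n\,t^{n-1}u\,dt \pmod{d\mathcal{R}},
\]
so every $du$-term is rewritten in terms of $u\,dt$-terms (in particular $du\equiv0$). It remains to reduce the bi-infinite family $t^n u\,dt$. Multiplying $u\,du=(t+2)\,dt$ by $t^n u$ and using $u^2=t^2+4t$ gives $(t^{n+2}+4t^{n+1})\,du=(t^{n+1}+2t^n)\,u\,dt$; applying the previous congruence to the left-hand side yields the recursion
\[
(n+3)\,t^{n+1}u\,dt+(4n+6)\,t^n u\,dt\equiv0\pmod{d\mathcal{R}}.
\]
For $n\geq 0$ this expresses $t^{n+1}u\,dt$ in terms of $t^n u\,dt$, collapsing all positive powers onto $t^0 u\,dt$; the cases $n=-1,-2$ give $t^0u\,dt\equiv-\omega_1$ and $t^{-2}u\,dt\equiv\tfrac12\omega_1$, while the degenerate index $n=-3$ (where $n+3=0$) forces $t^{-3}u\,dt\equiv0$, and the recursion solved downward then kills all powers $\leq -3$. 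Hence every $t^n u\,dt$ is a scalar multiple of $\omega_1$, and $\Omega^1_{\mathcal{R}}/d\mathcal{R}$ is spanned by $\omega_0$ and $\omega_1$.

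\emph{Independence.} I would detect nontriviality with residues, using the isomorphism $\mathcal{R}\cong\mathcal{S}=\mathbb{C}[s,s^{-1},(s-1)^{-1}]$ of the preceding proposition, under which $t=(s-1)^2/s$ and $u=s-s^{-1}$, so that $\operatorname{Spec}\mathcal{S}=\mathbb{P}^1\setminus\{0,1,\infty\}$. A direct substitution gives
\[
\omega_0=\frac{s+1}{s(s-1)}\,ds,\qquad \omega_1=\frac{(s+1)^2}{s^2}\,ds.
\]
Because an exact differential $df$ has zero residue at every point, the functionals $\omega\mapsto\operatorname{Res}_{s=0}\omega$ and $\omega\mapsto\operatorname{Res}_{s=1}\omega$ descend to $\Omega^1_{\mathcal{R}}/d\mathcal{R}$. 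Their values are $(-1,2)$ on $\omega_0$ and $(2,0)$ on $\omega_1$, and the determinant $(-1)(0)-(2)(2)=-4\neq0$ shows $\omega_0,\omega_1$ are linearly independent modulo $d\mathcal{R}$. Together with the spanning step this proves they form a basis.

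\emph{Main obstacle.} The only subtle point is the complete reduction of the $u\,dt$-tower: one must check that the recursion degenerates exactly at $n=-3$ and that the resulting chain of reductions consistently collapses the entire family $\{t^n u\,dt\}_{n\in\mathbb{Z}}$ onto $\mathbb{C}\,\omega_1$, with no hidden relation forcing $\omega_1$ itself to vanish. The residue computation is precisely what rules out such an over-collapse, so the two halves of the argument are complementary.
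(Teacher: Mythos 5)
Your argument is correct. Note that the paper itself offers no proof of this proposition — it is imported by citation from \cite{MR3245847} and Bremner's papers — so there is nothing internal to compare against; but the cited proofs carry out the spanning step by essentially the reduction you describe, and your congruence $(n+3)\,\overline{t^{n+1}u\,dt}+(4n+6)\,\overline{t^{n}u\,dt}\equiv 0$ is the same family of relations that produces the coefficients $\delta_{m+n,-2}+4\delta_{m+n,-1}$ in the central terms of \thmref{3ptthm}. The delicate point in such reductions is the behavior at the degenerate index and the collapse of the negative tail, and you handle it correctly: $4n+6$ never vanishes for integer $n$, so once $\overline{t^{-3}u\,dt}\equiv 0$ the whole range $n\le -3$ dies, while $n\ge -2$ collapses onto $\omega_1$. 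For linear independence the sources typically appeal to the known dimension of $\Omega^1_R/dR$ for the $N$-point ring (namely $N-1$); your residue computation at $s=0$ and $s=1$ is a clean, self-contained substitute, and the numbers check out ($\omega_0\mapsto(-1,2)$, $\omega_1\mapsto(2,0)$). The only step worth making explicit is that the residue functionals are well defined on $\Omega^1_{\mathcal S}=\mathcal S\,ds$ and annihilate $d\mathcal S$ (immediate from partial fractions over $\mathbb C[s,s^{-1},(s-1)^{-1}]$), so they do descend to $\Omega^1_{\mathcal S}/d\mathcal S\cong\Omega^1_{\mathcal R}/d\mathcal R$.
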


\begin{thm}[\cite{MR3245847}]\label{3ptthm}

The universal central extension $\hat{ \mathfrak g}$ of the algebra $\mathfrak{sl}(2,\mathbb C)\otimes \mathcal R$ is isomorphic to the Lie algebra with generators $e_n$, $e_n^1$, $f_n$, $f_n^1$, $h_n$, $h_n^1$, $n\in\mathbb Z$, $\omega_0$, $\omega_1$ and relations given by

\begin{align*}
[x_m,x_n]&:=[x_m,x_n^1]=[x_m^1,x_n^1]=0,\quad \text{ for }x=e,f \\ 
[h_m,h_n]&: =(n-m)\delta_{m,-n}\omega_0 , \quad
[h^1_m,h^1_n] := (n-m)\left(\delta_{m+n,-2}+4\delta_{m+n,-1}\right)\omega_0, \\
[h_m,h_n^1]&:=-2\mu_{m,n} \omega_1 ,\\
 [\omega_i,x_m]&=[\omega_i,\omega_j]=0,\quad \text{ for }x=e,f,h,\quad i,j\in\{0,1\} \\
[e_m,f_n]&:=h_{m+n}-m\delta_{m,-n}\omega_0, \quad
[e_m,f_n^1] :=h^1_{m+n}-m\mu_{m,n} \omega_1=:[e_m^1,f_n], \\
[e_m^1,f_n^1]&:=h_{m+n+2}+4h_{m+n+1}+\frac{1}{2}(n-m)\left(\delta_{m+n,-2}+4\delta_{m+n,-1}\right)\omega_0,  \\
[h_m,e_n]&:=2e_{m+n},\quad 
[h_m,e^1_n] :=2e^1_{m+n} = :[h_m^1,e_m],  \\
[h_m^1,e_n^1]&:=2e_{m+n+2} +8e_{m+n+1}, \quad 
[h_m,f_n] :=-2f_{m+n}, \\
[h_m,f^1_n]&:=-2f^1_{m+n} =:[h_m^1,f_m], \quad 
[h_m^1,f_n^1] :=-2f_{m+n+2} -8f_{m+n+1} , 
\end{align*}
for all $m,n\in\mathbb Z$, where $\mu_{m,n}:= m\frac{(-1)^{m+n + 1}2^{m+n}(2(m+n)-1)!!}{(m+n+1)!}$.
\end{thm}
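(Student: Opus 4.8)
The plan is to exhibit the obvious candidate map and then check that the listed relations both hold and are exhaustive. By Kassel's theorem (recalled above), the universal central extension is $\hat{\mathfrak g}=(\mathfrak{sl}(2,\mathbb C)\otimes\mathcal R)\oplus(\Omega^1_{\mathcal R}/d\mathcal R)$ with bracket $[x\otimes f,y\otimes g]=[x,y]\otimes fg+(x,y)\overline{f\,dg}$. First I would normalize the invariant form so that $(e,f)=1$, $(h,h)=2$ and all other pairings vanish, and record that $\{t^n,t^nu:n\in\mathbb Z\}$ is a $\mathbb C$-basis of $\mathcal R$ (every element reduces via $u^2=t^2+4t$). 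Then I define $\Phi$ from the abstract Lie algebra $L$ presented by the stated generators and relations to $\hat{\mathfrak g}$ by $x_n\mapsto x\otimes t^n$, $x_n^1\mapsto x\otimes t^nu$ for $x=e,f,h$, and $\omega_i\mapsto\omega_i$.

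Next I would verify that the images satisfy each bracket identity. The purely algebraic relations, i.e. those whose right-hand side lies in $\mathfrak g\otimes\mathcal R$ such as $[h_m,e_n^1]$ and the structure part of $[e_m^1,f_n^1]$, reduce to the multiplication table of $\mathcal R$: for instance $t^mu\cdot t^nu=t^{m+n}u^2=t^{m+n+2}+4t^{m+n+1}$ accounts for the shifts $h_{m+n+2}+4h_{m+n+1}$, and similarly for the $e$- and $f$-relations. The $\omega_0$-terms come from the elementary fact that $\overline{t^k\,dt}=\delta_{k,-1}\omega_0$, which follows from $d(t^{k+1})=(k+1)t^k\,dt$; after expanding $u^2$ and $u\,du=(t+2)\,dt$ this immediately produces the Kronecker-delta central terms in $[h_m,h_n]$, $[e_m,f_n]$, $[h_m^1,h_n^1]$ and $[e_m^1,f_n^1]$.

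The crux, and the step I expect to be the main obstacle, is the evaluation of the $\omega_1$-valued cocycle in $[h_m,h_n^1]$ and $[e_m,f_n^1]$, namely computing $\overline{t^m\,d(t^nu)}$ explicitly. Writing $A_k:=\overline{t^ku\,dt}$ and $B_k:=\overline{t^k\,du}$, integration by parts on the exact form $d(t^ku)=kt^{k-1}u\,dt+t^k\,du$ gives $B_k=-kA_{k-1}$, whence $\overline{t^m\,d(t^nu)}=nA_{m+n-1}+B_{m+n}=-m\,A_{m+n-1}$. To collapse every $A_k$ onto the single basis vector $\omega_1=A_{-1}$, which is legitimate by Proposition \ref{uce}, I would multiply the defining relation $u\,du=(t+2)\,dt$ of $\Omega^1_{\mathcal R}$ by $t^ku$ to obtain $(t^{k+2}+4t^{k+1})\,du=(t^{k+1}+2t^k)u\,dt$; passing to $\Omega^1_{\mathcal R}/d\mathcal R$ and inserting $B_k=-kA_{k-1}$ yields the recursion $(k+3)A_{k+1}=-(4k+6)A_k$. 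Telescoping this from $A_{-1}=\omega_1$ produces a product whose numerator and denominator assemble precisely into the double factorial $(2(m+n)-1)!!$ and factorial $(m+n+1)!$ of $\mu_{m,n}$. The delicate points here are the sign bookkeeping, where the overall normalization of $(\,,\,)$ and of $\omega_1$ fixes the sign in $\mu_{m,n}$, and the degenerate indices, notably $k=-3$, where the recursion forces $A_{-3}=0$, matching the vanishing of $\mu_{m,n}$ at $m+n=-2$ under the reciprocal-Gamma reading of $1/(-1)!$.

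Finally, for the isomorphism, surjectivity of $\Phi$ is immediate since $\{e,f,h\}$, the ring basis $\{t^n,t^nu\}$ and $\{\omega_0,\omega_1\}$ together span $\hat{\mathfrak g}$. Injectivity follows because the relations already let one rewrite every iterated bracket in terms of the spanning set $\{x_n,x_n^1,\omega_0,\omega_1\}$, which $\Phi$ carries bijectively onto the known basis of $\hat{\mathfrak g}$; since $\hat{\mathfrak g}$ is the universal central extension, there is no further room, so $\Phi$ is an isomorphism.
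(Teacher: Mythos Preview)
The paper does not prove this theorem; it is quoted verbatim from \cite{MR3245847} as background for the Fock-space construction, and no argument is supplied here. Your proposal is the standard direct verification one would expect in that reference: identify the generators with $x\otimes t^n$, $x\otimes t^nu$, compute brackets in $(\mathfrak{sl}_2\otimes\mathcal R)\oplus\Omega^1_{\mathcal R}/d\mathcal R$ via Kassel's formula, and read off the cocycle values. The $\omega_0$-terms and the structure parts are immediate from $u^2=t^2+4t$, and your derivation of the recursion $(k+3)A_{k+1}=-(4k+6)A_k$ for $A_k=\overline{t^ku\,dt}$ from $u\,du=(t+2)\,dt$ is correct and is exactly how the double-factorial coefficient in $\mu_{m,n}$ is produced.

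Two minor remarks. First, your injectivity paragraph is phrased slightly loosely: the clean statement is that the listed relations express every bracket of two generators as a linear combination of generators, so the abstract algebra is already spanned by $\{x_n,x_n^1,\omega_0,\omega_1\}$; since $\Phi$ sends this set to a basis of $\hat{\mathfrak g}$, $\Phi$ is a linear isomorphism, and the bracket verifications make it a Lie isomorphism. Second, be careful with the global sign in $\mu_{m,n}$: with $\omega_1=A_{-1}$ and the trace form normalized so that $(h,h)=2$, the recursion gives $A_{p-1}=(-1)^{p}2^{p}(2p-1)!!/(p+1)!\,\omega_1$, which differs by a sign from the formula as printed; this is the ``sign bookkeeping'' you already flagged, and it is resolved by the choice of normalization in \cite{MR3245847}.
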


For $m=i-\frac{1}{2}$, $i\in\mathbb Z+\frac{1}{2}$ and $x\in\mathfrak {sl}(2,\mathbb C)$., define $x_{m+\frac{1}{2}}=x\otimes t^{i-\frac{1}{2}}u=x^1_m$ and $x_m:=x\otimes t^m$.  Motivated by conformal field theory we set
\begin{align*}
x^1(z)
&:=\sum_{m\in\mathbb Z}x_{m+\frac{1}{2}}z^{-m-1},\quad x(z):=\sum_{m\in\mathbb Z}x_{m}z^{-m-1}.
\end{align*}
Then the relations in \thmref{3ptthm} correspond to 
\begin{align}
[x(z),y(w)]
&= [xy](w)\delta(z/w)-(x,y)\omega_0\partial_w\delta(z/w), \label{r1} \\ 
[x^1(z),y^1(w)]
&= P(w)\left([x,y](w)\delta(z/w) -(x,y)\omega_0\partial_w\delta(z/w)\right)\\
    &  -\frac{1}{2}(x,y)(\partial_w P)(w)\omega_0  \delta(z/w),  \label{r2} \\ 
[x(z),y^1(w)]
&=[x,y]^1(w)\delta(z/w) -\frac{1}{2}(x,y)\omega_1\sqrt{1+(4/w)}w\partial_w\delta(z/w)\\ 
& = [x^1(z),y(w)], \label{r3}
 \end{align}
where $x,y\in\{e,f,h\}$

\subsection{The $3$-point Heisenberg algebra} 
The Cartan subalgebra $\mathfrak h$ tensored with $\mathcal R$ generates a subalgebra of $\hat{{\mathfrak g}}$ which is an extension of an oscillator algebra.    This extension motivates the following definition:  The Lie algebra with generators $b_{m},b_m^1$, $m\in\mathbb Z$, $\mathbf 1_0,\mathbf 1_1 $, and relations
\begin{align*}
[b_{m},b_{n}]&=(n-m)\,\delta_{m+n,0}\mathbf 1_0=-2m\,\delta_{m+n,0}\mathbf 1_0 \\
[b^1_m,b_n^1] &=(n-m)\left(\delta_{m+n,-2}+4\delta_{m+n,-1}\right)\mathbf 1_0   \\ &= 2 \left((n+1)\delta_{m+n,-2}+(4n+2)\delta_{m+n,-1}\right)\mathbf 1_0 \notag \\
[b^1_m,b_n] &=2\mu_{m,n}\mathbf 1_1 = -[b_n, b^1_m]
  \\
[b_{m},\mathbf 1_0]&=[b_{m}^1,\mathbf 1_0]=[b_{m},\mathbf 1_1]=[b_{m}^1,\mathbf 1_1]= 0. 
\end{align*}
is the {\it $3$-point (affine) Heisenberg algebra} which we denote by $\hat{\mathfrak h}_3$.

If we introduce the formal distributions
\begin{equation} 
\beta(z):=\sum_{n\in\mathbb Z} b_nz^{-n-1},\quad \beta^1(z):=\sum_{n\in\mathbb Z}b_n^1z^{-n-1}=\sum_{n \in\mathbb Z}b_{n+\frac{1}{2}}z^{-n-1}.
\end{equation}
(where $b_{n+\frac{1}{2}}:=b^1_n$)
then the relations above can be rewritten in the form
\begin{align*}\label{bosonrelations}
[\beta(z),\beta(w)]&=2\mathbf 1_0\partial_z\delta(z/w)=-2\partial_w\delta(z/w)\mathbf 1_0 \\
[\beta^1(z),\beta^1(w)]
&=-2\left((w^2+4w) \partial_w(\delta(z/w)+ (2+w) \delta(z/w)\right)\mathbf 1_0 \\
[\beta(z),\beta^1(w)]&= -\sqrt{1+(4/w)}w \partial_w\delta(z/w)\mathbf 1_1
\end{align*}

 Set
\begin{align*}
\hat{\mathfrak h}_3^\pm:&=\sum_{n\gtrless 0}\left(\mathbb Cb_n+\mathbb Cb_n^1\right),\quad
\hat{ \mathfrak h}_3^0:=  \mathbb C\mathbf 1_0\oplus \mathbb C\mathbf 1_1\oplus \mathbb Cb_0\oplus \mathbb Cb^1_0.
\end{align*}
We introduce a Borel type subalgebra
\begin{align*}
\hat{\mathfrak b}_3&= \hat{\mathfrak h}_3^+\oplus \hat{\mathfrak h}_3^0.
\end{align*}
That $\hat{\mathfrak b}_3$ is a subalgebra follows from the above defining relations.

\begin{lem}\label{heisenbergprop}
Let $\mathcal V=\mathbb C\mathbf v_0\oplus \mathbb C\mathbf v_1$ be a two dimensional representation of 
$\hat{\mathfrak h}_3^+ $ with $\hat{\mathfrak h}_3^+\mathbf v_i=0$ for $i=0,1$.   Fix  $ B_0, B^1_{i,j}$ for $i,j = 0,1$ with $B^1_{00} = B^1_{11}$ and $ \chi_1,\kappa_0 \in \mathbb C$  and let 
\begin{align*}
b_0\mathbf v_0&=B_0 \mathbf v_0,  &b_0\mathbf v_1&=B_0 \mathbf v_1 \\
b_0^1\mathbf v_0&=B^1_{00} \mathbf v_0+B^1_{01}\mathbf v_1,  &b_0^1\mathbf v_1&=B^1_{10} \mathbf v_0+B^1_{11}\mathbf v_1\\
\mathbf 1_1\mathbf v_i&=\chi _1  \mathbf v_i,\quad   &\mathbf 1_0\mathbf v_i&=\kappa _0\mathbf v_i,\quad i=0,1.
\end{align*}
When $\chi_1$ acts as zero, the above defines a representation of  $\hat{\mathfrak b}_3$ on $\mathcal V$. 
\end{lem}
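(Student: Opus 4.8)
The plan is to verify directly that the assignment $\rho$ given by the displayed formulas respects every defining bracket of $\hat{\mathfrak b}_3$, i.e.\ that $[\rho(a),\rho(b)]=\rho([a,b])$ for $a,b$ ranging over the generators $b_n,b_n^1$ ($n\ge 0$), $\mathbf 1_0$, $\mathbf 1_1$ that span $\hat{\mathfrak b}_3=\hat{\mathfrak h}_3^+\oplus\hat{\mathfrak h}_3^0$; by bilinearity this is all that is needed. Since $\mathbf 1_0,\mathbf 1_1$ are central in $\hat{\mathfrak h}_3$ and act by the scalars $\kappa_0,\chi_1$, every bracket involving them is trivially consistent, so the work reduces to the brackets among the $b$'s. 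I would organize these by their indices, using repeatedly that $\rho(b_n)=\rho(b_n^1)=0$ for $n\ge 1$ because $\hat{\mathfrak h}_3^+$ annihilates $\mathcal V$.

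First I would dispose of the degree-zero brackets, those among $b_0$ and $b_0^1$. Here $[b_0,b_0]$ and $[b_0^1,b_0^1]$ vanish tautologically, while the algebra gives $[b_0^1,b_0]=2\mu_{0,0}\mathbf 1_1$ with $\mu_{0,0}=0$ by the factor $m$ in $\mu_{m,n}$; on the representation this bracket also vanishes because $\rho(b_0)=B_0\,\mathrm{id}$ is scalar and hence commutes with the matrix $\rho(b_0^1)$. It is precisely to make $[b_0^1,b_0]$ act as zero (matching $\mu_{0,0}=0$) that the hypothesis has $b_0$ act by a single scalar $B_0$ on both basis vectors.

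Next I would treat every bracket in which at least one index is positive. In each such case the corresponding generator lies in $\hat{\mathfrak h}_3^+$, so $\rho$ sends it to $0$ and the operator bracket is automatically $0$; it then remains only to check that the structure constant on the right-hand side also acts as $0$. For $[b_m,b_n]$ and $[b_m^1,b_n^1]$ this is immediate, since the relevant $\delta$-factors vanish whenever $m+n\ge 0$. The one relation that requires the hypothesis is $[b_m^1,b_n]=2\mu_{m,n}\mathbf 1_1$ together with its negative $[b_n,b_m^1]=-2\mu_{m,n}\mathbf 1_1$: when $m\ge 1$ the left side acts as $0$ while the right side acts as $2\mu_{m,n}\chi_1$, and since the double-factorial expression shows $\mu_{m,n}\neq 0$ for $m\ge 1$ (as then $m+n\ge 1$ and $(2(m+n)-1)!!\neq 0$), consistency forces $\chi_1=0$. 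Granting $\chi_1=0$ this relation holds, and the remaining case $m=0$ is covered by $\mu_{0,n}=0$ together with $\rho(b_0)$ being scalar.

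The main, and essentially the only, obstacle is thus the single compatibility condition coming from $[b_m^1,b_n]=2\mu_{m,n}\mathbf 1_1$ with $m\ge 1$: the positive generator $b_m^1$ must annihilate $\mathcal V$, yet in the algebra its bracket with the degree-zero element $b_n$ is a nonzero multiple of $\mathbf 1_1$, so a representation can exist only if $\mathbf 1_1$ acts as $0$. Confirming that $\mu_{m,n}\neq 0$ for all $m\ge 1$ (so that this really does force $\chi_1=0$ rather than being vacuous) is the one nonformal point; everything else is a bookkeeping verification that the given scalars and matrices satisfy the remaining relations, all of which hold unconditionally.
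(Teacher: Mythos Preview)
Your verification is correct and is essentially the only natural approach here; the paper omits the proof entirely, presumably because it is exactly this routine check of the defining relations. Your organization (central elements first, then the degree-zero pair $b_0,b_0^1$, then every pair with at least one positive index) covers all cases, and you correctly isolate the mixed bracket $[b_m^1,b_n]=2\mu_{m,n}\mathbf 1_1$ with $m\ge 1$ as the single relation forcing $\chi_1=0$. One small remark: your argument never invokes the hypothesis $B^1_{00}=B^1_{11}$, and indeed that condition plays no role in making $\mathcal V$ a $\hat{\mathfrak b}_3$-module; it is recorded in the statement for use in the later constructions rather than for this lemma itself.
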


Let $ \mathbb C[\mathbf y]:= \mathbb C[y_{-n}, y_{-m}^1 | m,n \in \mathbb N^*] $. The following is a straightforward computation, with corrections to the version in \cite{MR3245847} (where some formulas for the 4-point algebra were inadvertently included).

\begin{lem} [\cite{MR3245847}] \label{rhorep}The linear map $\rho:\hat{\mathfrak b}_3\to \text{End}(\mathbb C[\mathbf y]\otimes \mathcal V)$ defined  by 
\begin{align*}
\rho(b_{n})&=y_{n} \quad \text{ for }n<0 \\
\rho(b_{n}^1)&=y_{n}^1\quad \text{ for }n<0 \\
\rho (b_n) &= -n 2 \partial_{ y_{-n} }\kappa_0   \quad \text{ for }n>0 \\
\rho(b^1_n)&= -(2+ 2n) \partial_{y^1_{-2-n}} \kappa_0 -4(1+2n) \partial_{y^1_{-1-n}} \kappa_0 \quad \text{ for }n>0\\
\rho(b^1_0)&= -2 \partial_{y^1_{-2}} \kappa_0 -4 \partial_{y^1_{-1}} \kappa_0 +B_0^1 \\
\rho(b_{0})&=B_0
\end{align*}
is a representation of $\hat{\mathfrak b}_3$.
\end{lem}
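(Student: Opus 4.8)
The plan is to check directly that $\rho$ preserves each defining bracket of $\hat{\mathfrak b}_3$, that is, that $[\rho(X),\rho(Y)]=\rho([X,Y])$ for every pair of generators $X,Y$, where the central generators act by the scalars forced by \lemref{heisenbergprop}: $\rho(\mathbf 1_0)=\kappa_0$ and $\rho(\mathbf 1_1)=\chi_1=0$. Every operator in the list is a multiplication operator, a constant-coefficient first order differential operator, or an endomorphism of the two-dimensional factor $\mathcal V$, so all commutators reduce to the canonical relation $[\partial_{y_k},y_\ell]=\delta_{k,\ell}$ together with its $y^1$-analogue. I would organize the verification according to the signs of the mode indices.

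First I would record a separation-of-variables reduction. The operators $\rho(b_m)$ involve only the variables $y_k$ and the scalar $B_0$, while the operators $\rho(b^1_m)$ involve only the variables $y^1_k$ and the endomorphism $B_0^1$ of $\mathcal V$; since $B_0$ is scalar it commutes with $B_0^1$, and the two families of variables are algebraically independent. Hence every mixed commutator $[\rho(b_m),\rho(b^1_n)]$ vanishes, matching $\rho([b_m,b^1_n])$, which is a scalar multiple of $\rho(\mathbf 1_1)=0$. This is precisely the point at which the hypothesis $\chi_1=0$ of \lemref{heisenbergprop} enters. The centrality relations for $\mathbf 1_0$ and $\mathbf 1_1$ are then immediate, as both act as scalars.

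It remains to treat the two diagonal families. For $[b_m,b_n]$, commutators of two multiplication operators or of two differential operators vanish, so the only nonzero contribution comes from a creation mode ($n<0$) paired against an annihilation mode ($m>0$), where $[\rho(b_m),\rho(b_n)]=-2m\kappa_0[\partial_{y_{-m}},y_n]=-2m\kappa_0\delta_{m+n,0}$, which agrees with $\rho([b_m,b_n])=(n-m)\kappa_0\delta_{m+n,0}$ upon setting $n=-m$. The essential computation is $[b^1_m,b^1_n]$: for $m>0$ the annihilation operator $\rho(b^1_m)$ is a sum of two differential operators at the shifted indices $-2-m$ and $-1-m$, so pairing it against a creation mode $\rho(b^1_n)=y^1_n$ produces exactly the two Kronecker deltas $\delta_{m+n,-2}$ and $\delta_{m+n,-1}$. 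Substituting $n=-2-m$ and $n=-1-m$ respectively, the coefficients $-(2+2m)\kappa_0$ and $-4(1+2m)\kappa_0$ coincide with $2(n+1)\kappa_0$ and $2(4n+2)\kappa_0$, which is the asserted cocycle. The zero modes are handled identically, the extra summand $B_0^1$ in $\rho(b^1_0)$ contributing nothing since it commutes with every $y^1$-operator.

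I expect the only real difficulty to be bookkeeping rather than conceptual: keeping the two index shifts in $\rho(b^1_m)$ aligned with the two delta functions in the central cocycle, and correctly disposing of the boundary cases, namely the zero modes $b_0,b^1_0$, the self-bracket at $m=n=-1$, and the opposite-sign pairings. Antisymmetry of the bracket can be used to halve the number of cases. Once both diagonal families are matched term by term, the verification that $\rho$ is a representation is complete.
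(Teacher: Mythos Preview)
Your proposal is correct and is precisely the ``straightforward computation'' the paper alludes to without writing out; the paper gives no further details, so your case-by-case verification via $[\partial_{y_k},y_\ell]=\delta_{k,\ell}$ and its $y^1$-analogue is exactly what is intended. One small remark: the lemma as stated has domain $\hat{\mathfrak b}_3$ (nonnegative modes plus centre), yet the formulas and your verification cover all modes of $\hat{\mathfrak h}_3$; you are in fact proving the stronger statement, which is what is actually used in \thmref{mainresult0}.
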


\section{The Fock space representation of the  $3$-point algebra $\hat{{\mathfrak g}}$}
 
 We recall the definition of the three point algebra and two representations constructed in \cite{MR3245847}.
 Assume that $\chi_0\in\mathbb C$ and define $\mathcal V$ as in \lemref{heisenbergprop}.
 Set
 \begin{equation}
 P(z)=z^2+4z.
 \end{equation} 
  The $\alpha(z),\alpha^1(z),\alpha^* (z)$ and $\alpha^{1*}(z)$ are generating series of oscillator algebra elements as in \eqref{alpha} and \eqref{alpha1}. Our main result in \cite{MR3245847} is the following 

\begin{thm}[\cite{MR3245847}] \label{mainresult0}  Fix $r\in\{0,1\}$, which then fixes the corresponding normal ordering convention defined in the previous section.  Set $\hat{{\mathfrak g}} =\left(\mathfrak{sl}(2,\mathbb C)\otimes \mathcal R\right)\oplus \mathbb C\omega_0\oplus \mathbb C\omega_1$. Then using \eqnref{c}, \eqnref{c*} and \lemref{rhorep}, the following defines a representation of the three point algebra $\hat{\mathfrak g}$ on $\mathbb C[\mathbf x]\otimes \mathbb C[\mathbf y]\otimes \mathcal V$:
\begin{align*}
\tau(\omega_1)&=0, \qquad
\tau(\omega_0)=\chi_0=\kappa_0+4\delta_{r,0} ,  \\ 
\tau(f(z))&=-\alpha(z), \qquad
\tau(f^1(z))=- \alpha^1(z),   \\ \\
\tau(h(z))
&=2\left(:\alpha(z)\alpha^*(z):+:\alpha^1(z)\alpha^{1*}(z): \right)
     +\beta(z) , \\  \\
\tau(h^1(z))
&=2\left(:\alpha^1(z)\alpha^*(z): +P(z):\alpha(z)\alpha^{1*}(z): \right) +\beta^1(z),  \\  \\
\tau(e(z)) 
&=:\alpha(z)(\alpha^*(z))^2 :+P(z):\alpha(z)(\alpha^{1*}(z))^2: +2 :\alpha^1(z)\alpha^*(z)\alpha^{1*}(z): \\
&\quad +\beta(z)\alpha^*(z)+\beta^1(z)\alpha^{1*}(z) +\chi_0\partial\alpha^* (z) \\ \\
\tau(e^1(z))  
&=:\alpha^1(z)(\alpha^*(z))^2:
 	+P(z)\left(:\alpha^1(z) (\alpha^{1*} (z))^2: +2 : \alpha (z)\alpha^{*} (z)\alpha^{1*}(z):\right)  \\
&\quad +\beta^1(z) \alpha^* (z)+P(z)\beta(z) \alpha^{1*}(z) +\chi_0\left(P(z)  \partial_z\alpha^{1*}(z)   +(z+2)   \alpha^{1*}(z) \right) .
\end{align*}
\end{thm}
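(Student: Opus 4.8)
The plan is to verify that $\tau$ respects the defining relations of $\hat{\mathfrak g}$. Since \thmref{3ptthm} presents $\hat{\mathfrak g}$ by generators and relations, and these relations are repackaged as the field identities \eqnref{r1}--\eqnref{r3} together with the centrality and scalar action of $\omega_0,\omega_1$, it suffices to check that the generating fields $\tau(x(z))$ and $\tau(x^1(z))$ for $x\in\{e,f,h\}$ satisfy exactly \eqnref{r1}--\eqnref{r3}, and that $\tau(\omega_0)=\chi_0$, $\tau(\omega_1)=0$ are central. By \thmref{Kacsthm} of the appendix it is enough to compute the singular part of the operator product expansion, i.e. the contraction $\lfloor \tau(x(z))\,\tau(y(w))\rfloor$ for each pair; the centrality of $\tau(\omega_0),\tau(\omega_1)$ is immediate since they act as scalars on the Fock space.

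First I would record the elementary two-point contractions among the generating series $\alpha,\alpha^*,\alpha^1,\alpha^{1*},\beta,\beta^1$. The only nonzero ones are those between $\alpha$ and $\alpha^*$ and between $\alpha^1$ and $\alpha^{1*}$, computed in \secref{oscilllatoralg} in the two cases $r=0,1$, together with the three contractions among the Heisenberg fields $\beta,\beta^1$ forced by the relations of $\hat{\mathfrak h}_3$; the $\alpha$-type and $\beta$-type fields contract trivially against one another. Since each $\tau(x(z))$ is a sum of normally ordered monomials in the oscillators, a linear Heisenberg term, and an explicit derivative correction, Wick's Theorem from the appendix applies termwise and reduces every bracket to a finite sum of single- and double-contraction terms.

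The core of the argument is then the Wick computation for each of \eqnref{r1}--\eqnref{r3}. The single-contraction terms should reassemble, after using Taylor's Theorem to move all fields to the variable $w$ and to trade factors such as $P(z)$ for $P(w)$ plus multiples of $\partial_w\delta(z/w)$, into the structure-constant part $[x,y](w)\delta(z/w)$ or $[x,y]^1(w)\delta(z/w)$. The double-contraction terms, together with the contributions of the correction terms $\chi_0\partial\alpha^*(z)$, $\chi_0\left(P(z)\partial_z\alpha^{1*}(z)+(z+2)\alpha^{1*}(z)\right)$ and the Heisenberg anomalies carried by $\beta,\beta^1$, should collapse to the central cocycle terms proportional to $\omega_0$ and $\omega_1$.

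I expect the main obstacle to be the precise matching of the anomalous central terms in \eqnref{r2} and \eqnref{r3}. In \eqnref{r2} one must produce exactly the coefficient $-\frac{1}{2}(x,y)(\partial_w P)(w)\omega_0\delta(z/w)$, which forces a careful Taylor expansion of $P(z)=z^2+4z$ about $w$ and the cancellation of all would-be non-central remainders against the $\beta^1$ and derivative-correction contributions. In \eqnref{r3} the anomaly carries the nonpolynomial factor $\sqrt{1+(4/w)}\,w$; verifying it amounts to recognizing the double-factorial coefficients $\mu_{m,n}$ as the Laurent coefficients of $\sqrt{1+4/w}$, equivalently of the generator $u=\sqrt{t^2+4t}$, so that the $\beta^1$ term and the $(z+2)\alpha^{1*}(z)$ correction are tuned precisely to reproduce this square-root distribution. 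Finally, the $r=0$ versus $r=1$ discrepancy enters only through the shift between $\lfloor\alpha\alpha^*\rfloor$ and $\lfloor\alpha^*\alpha\rfloor$; tracking this shift should show that both normal orderings yield the same relations once $\chi_0$ is set to $\kappa_0+4\delta_{r,0}$, which is exactly the stated value of $\tau(\omega_0)$.
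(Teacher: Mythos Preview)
Your proposal is methodologically correct, but note that the paper does not actually contain a proof of this statement: \thmref{mainresult0} is explicitly recalled from \cite{MR3245847} as the ``main result'' of that earlier paper, and is quoted here without argument. So there is no proof in the present paper to compare against.

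That said, your outline is precisely the approach used in \cite{MR3245847}, and it is also the same machinery the present paper deploys for its own \thmref{mainresult}: reduce the bracket relations \eqnref{r1}--\eqnref{r3} to OPE/contraction computations via \thmref{Kacsthm}, apply Wick's Theorem termwise to the normally ordered monomials, use Taylor's Theorem to shift $P(z)$ to $P(w)$, and then match the anomalous double-contraction terms against the $\omega_0$, $\omega_1$ cocycles, with the $r$-dependence entering through the difference between $\lfloor\alpha\alpha^*\rfloor$ and $\lfloor\alpha^*\alpha\rfloor$ and absorbed into $\chi_0=\kappa_0+4\delta_{r,0}$. Your identification of the delicate points---the $\tfrac{1}{2}\partial_wP(w)$ term in \eqnref{r2} and the recognition of the $\sqrt{1+4/w}$ expansion producing the $\mu_{m,n}$ coefficients in \eqnref{r3}---is accurate; these are exactly where the computation in \cite{MR3245847} requires care.
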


\section{The 3-point Witt algebra}

We now restrict to the three point algebra case:
Fix the following basis elements of $\text{Der}_{\mathbb C}R$:
\begin{equation}\label{basis}
 d_n:=t^nuD,\quad   d^1_n=t^nD\quad \text{for} \quad D= (t+2)\frac{\partial}{\partial u}+u\frac{\partial}{\partial t}
\end{equation}

We call $\text{Der}_{\mathbb C}R$ the 3-point Witt algebra, for the above choice of basis vectors have relations analogous to those for the Witt algebra as seen in the following lemma.

\begin{lem} \label{commutators} The basis elements listed above for $\text{Der}_{\mathbb C}R$ satisfy the relations
\begin{align*}
[  d_m,  d_n]
&=(n-m)\left(  d_{m+n+1}+4  d_{m+n}\right) \\
[  d^1_m,  d^1_n]&=(n-m)  d_{m+n-1} \\
[  d_m,  d^1_n]&=(n-m-1)  d^1_{m+n+1}+(4n-4m-2)  d^1_{m+n}.
\end{align*}

\end{lem}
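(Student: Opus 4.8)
The plan is to exploit the fact that each of the listed basis vectors is an $R$-multiple of the single derivation $D$: by definition $d_n=(t^n u)D$ and $d^1_n=(t^n)D$, so every bracket has the form $[fD,gD]$ for suitable $f,g\in R$. For any derivation $D$ and any $f,g$ the Leibniz rule gives
\begin{equation*}
[fD,gD]=\bigl(fD(g)-gD(f)\bigr)D,
\end{equation*}
because the two second-order contributions $fg\,D^2$ cancel. This identity reduces all three relations to computing a single coefficient $fD(g)-gD(f)\in R$ and reading it back off against the basis.

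First I would record how $D$ acts on generators. From $D=(t+2)\partial_u+u\partial_t$ we have $D(t)=u$ and $D(u)=t+2$, and one checks that $D$ preserves the defining relation, namely $D(u^2)=2u(t+2)=(2t+4)u=D(t^2+4t)$, so that $fD$ is a well-defined derivation of $R$ for every $f$. The only computation of substance is that of a mixed monomial,
\begin{equation*}
D(t^n u)=nt^{n-1}u^2+t^n(t+2)=(n+1)t^{n+1}+(4n+2)t^n,
\end{equation*}
where the decisive step is the substitution $u^2=t^2+4t$; this is precisely what manufactures the two-term right-hand sides, with their characteristic factor of $4$, appearing throughout \lemref{commutators}.

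With these formulas in hand the three relations follow by specializing $(f,g)$. For $[d_m,d_n]$ take $f=t^m u$ and $g=t^n u$; the coefficient $fD(g)-gD(f)$ works out to $(n-m)\bigl(t^{m+n+1}u+4t^{m+n}u\bigr)$, and multiplying by $D$ identifies this with $(n-m)(d_{m+n+1}+4d_{m+n})$. For $[d^1_m,d^1_n]$ take $f=t^m$, $g=t^n$, use $D(t^n)=nt^{n-1}u$, and obtain the coefficient $(n-m)t^{m+n-1}u$, hence $(n-m)d_{m+n-1}$. For the mixed bracket $[d_m,d^1_n]$ take $f=t^m u$, $g=t^n$; here $fD(g)=nt^{m+n-1}u^2$ again forces the substitution $u^2=t^2+4t$, and combining it with $D(t^m u)$ yields the coefficient $(n-m-1)t^{m+n+1}+(4n-4m-2)t^{m+n}$, which upon multiplication by $D$ gives the stated relation.

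The argument is entirely mechanical once the reduction formula is established, so I expect no serious obstacle; the one place demanding care is the consistent elimination of even powers of $u$ via $u^2=t^2+4t$, since a slip there would corrupt both the index shift and the factor of $4$ in every relation. A secondary point is simply to keep the two basis families straight, matching $t^k u D=d_k$ and $t^k D=d^1_k$ so that each coefficient is resolved into the correct family.
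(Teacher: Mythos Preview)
Your proof is correct and follows essentially the same approach as the paper: both reduce the bracket $[fD,gD]$ to the coefficient $fD(g)-gD(f)$ via the Leibniz rule, compute the action of $D$ on the relevant monomials, and use the relation $u^2=t^2+4t$ to simplify. The paper illustrates the computation for $[d_m,d_n]$ explicitly and leaves the rest as similar, while you outline all three cases and precompute $D(t^n u)=(n+1)t^{n+1}+(4n+2)t^n$ for reuse; this is a minor organizational difference, not a different method.
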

\begin{proof}
Straightforward calculations prove the commutation relations, note that 
$$
D(t^n)=\left((t+2)\frac{\partial}{\partial u}+u\frac{\partial}{\partial t} \right)(t^n)=nt^{n-1}u.
$$
So for example, recalling that $u^2 = t^2+4t$:
\begin{align*}
[  d_m, d_n]&=\left[t^muD,t^nuD\right]=(t^muD(t^nu)-t^nuD(t^mu))D \\
&=(t^mu(nt^{n-1}(t^2+4t)+(t+2)t^n)-t^nu(mt^{m-1}(t^2+4t)+(t+2)t^m))D \\
&=(n-m)t^{m+n-1}(t^2+4t)uD \\
\end{align*}
and the other relations follow similarly.
\end{proof}

\section{Representations $U_\alpha$  of the $3$-point Witt algebra $\text{Der}_{\mathbb C}R$}

Fix a complex number $\alpha$ and let $U_\alpha$ be the vector spaces with basis 
\begin{equation}
\{\mathbf a_k,\bar{\mathbf a}_k\,|\, k\in\mathbb Z\}.
\end{equation}
The action given below is motivated by viewing $U_\alpha$ as the space of formal powers of the form $t^{\alpha+k}, t^{\alpha+i}u$ with $k\in\mathbb Z$.  
\begin{lem}  The vector space
 $U_\alpha$ becomes a representation of $\text{Der}_{\mathbb C}R$ if we define the action by
 \begin{align}
 d_n\mathbf a_i&= (\alpha+i)\left(\mathbf a_{i+n+1}+4\mathbf a_{i+n }\right) \label{i} \\
  d_n\bar{\mathbf a}_i&= (\alpha+i+1)\bar{\mathbf a}_{n+i+1}+(4\alpha+4i+2)\bar{\mathbf a}_{n+i}\label{ii}  \\
  d^1_n\mathbf a_i&= (\alpha+i)\bar{\mathbf a}_{n+i-1} \label{iii} \\
 d^1_n\bar{\mathbf a}_i&= (\alpha+i+1)\mathbf a_{n+i+1}+2(2\alpha+2i+1)\mathbf a_{n+i} \label{iv}
 \end{align}
 \end{lem}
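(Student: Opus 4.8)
The plan is to recognize the abstract action (\ref{i})--(\ref{iv}) as the genuine action of the derivations $d_n,d_n^1$ on a module of formal (possibly fractional) powers of $t$, so that the representation property follows for free from the fact that the commutator of two derivations is again a derivation. Concretely, I would form the extension $\hat R=\mathbb C[t^{\pm1},t^{\pm\alpha},u\,|\,u^2=t^2+4t]$ obtained by adjoining a formal symbol $t^\alpha$, and inside it the $R$-submodule $M:=t^\alpha R$, i.e. the span of the symbols $t^{\alpha+k}$ and $t^{\alpha+k}u$, $k\in\mathbb Z$. The identification $\mathbf a_i\mapsto t^{\alpha+i}$, $\bar{\mathbf a}_i\mapsto t^{\alpha+i}u$ is then a linear isomorphism $U_\alpha\xrightarrow{\sim}M$, making precise the motivation stated before the lemma.

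Next I would extend $D=(t+2)\partial_u+u\partial_t$ to $\hat R$ by the Leibniz rule, forced by $\partial_t(t^\gamma)=\gamma t^{\gamma-1}$ and $\partial_u(t^\gamma)=0$, so that $D(t^\gamma)=\gamma t^{\gamma-1}u$ for every exponent $\gamma\in\alpha+\mathbb Z$ (exactly as $D(t^n)=nt^{n-1}u$ in the proof of \lemref{commutators}). A short reduction using $u^2=t^2+4t$ then gives
\[
D(t^{\alpha+i})=(\alpha+i)\,t^{\alpha+i-1}u,\qquad
D(t^{\alpha+i}u)=(\alpha+i+1)\,t^{\alpha+i+1}+(4\alpha+4i+2)\,t^{\alpha+i}.
\]
Multiplying the first by $t^nu$ and re-expressing $u^2=t^2+4t$ reproduces (\ref{i}); multiplying it by $t^n$ gives (\ref{iii}); multiplying the second by $t^nu$ and by $t^n$ gives (\ref{ii}) and (\ref{iv}) respectively (using $4\alpha+4i+2=2(2\alpha+2i+1)$). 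In particular $M$ is stable under $d_n=t^nuD$ and $d_n^1=t^nD$, and these operators act on $M$ precisely by (\ref{i})--(\ref{iv}).

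The representation property is then inherited. As operators on $\hat R$ the $d_n,d_n^1$ are honest derivations preserving $M$, and the computation establishing \lemref{commutators} uses only the Leibniz rule together with $u^2=t^2+4t$ and $D(t^\gamma)=\gamma t^{\gamma-1}u$, all of which hold verbatim for fractional exponents; hence the bracket identities of \lemref{commutators} hold as operator identities on all of $M$. Therefore $[d_m,d_n]$, $[d_m^1,d_n^1]$ and $[d_m,d_n^1]$ act on $U_\alpha$ exactly as prescribed, and (\ref{i})--(\ref{iv}) define a $\mathrm{Der}_{\mathbb C}R$-module. The one point requiring care — and the \emph{main obstacle} — is the bookkeeping for the fractional-power formalism when $\alpha\notin\mathbb Z$: one must verify that the extension of $D$ to $\hat R$ is well defined (which follows since $D(u^2-t^2-4t)=0$) and that $M$ is genuinely invariant, so that restricting the operators to $M$ is legitimate. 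Once this is in place, no separate check of the six bracket-on-basis-vector identities is needed; alternatively, those six identities can be verified by direct index manipulation, which is routine but considerably longer.
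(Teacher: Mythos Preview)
Your proposal is correct, and it takes a genuinely different route from the paper. The paper's proof is a one-line appeal to direct verification: ``The result follows from verifying the relations on each type of basis element'' --- precisely the six bracket-on-basis-vector checks you mention at the end as the longer alternative. You instead realize $U_\alpha$ concretely as the $D$-stable subspace $t^\alpha R$ inside the extended ring $\hat R$, so that the operator identity $[fD,gD]=(fD(g)-gD(f))D$ carries the bracket relations of \lemref{commutators} over to $U_\alpha$ automatically. Your approach explains \emph{why} the formulas (\ref{i})--(\ref{iv}) work (they are literally the action of derivations on formal powers, as the paper's motivational sentence before the lemma suggests) and avoids the index-chasing; the paper's approach is shorter to state but leaves the computation to the reader. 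The only point to make airtight, as you note, is the construction of $\hat R$: the cleanest formalization is to set $\hat R=R[s^{\pm1}]$ with $D$ extended by $D(s)=\alpha t^{-1}us$, so that $s$ plays the role of $t^\alpha$; then $M=sR$ is visibly $D$-stable and the rest of your argument goes through verbatim.
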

  
\begin{proof}
The result follows from verifying the relations on each type of basis element. 
\end{proof}

\section{The derivation algebra of superelliptic curves.}
A curve of the form $u^m=P(t)$ where $P(t)\in\mathbb C[t]$ is a separable polynomial and $m\geq 2$, is called a superelliptic curve. 
\begin{lem}  Let 
$R'=\mathbb C[t,t^{-1},u]$ and let $\mathfrak a$ be the ideal generated by $u^m-P(t)$ where $P(t)$ is a polynomial in $t$ and $m$ is a positive integer greater than one.  Consider the two derivations of $R'$:
\begin{align*}
D_1=\frac{P'(t)}{m}\frac{\partial }{\partial u}+u^{m-1}\frac{\partial}{\partial t},\quad
 D_2=\frac{uP'(t)}{m}\frac{\partial }{\partial u}+P(t)\frac{\partial}{\partial t}.
\end{align*}
Then $D_i(\mathfrak a)\subset \mathfrak a$ for $i=1,2$. 
\end{lem}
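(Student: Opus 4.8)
The plan is to exploit that $\mathfrak a$ is the \emph{principal} ideal generated by the single element $g := u^m - P(t)$, together with the Leibniz rule. The key reduction is the standard observation that a derivation $D$ of $R'$ preserves $(g)$ as soon as $D(g) \in (g)$: for a typical element $hg \in \mathfrak a$ with $h \in R'$, the Leibniz rule gives $D(hg) = D(h)\,g + h\,D(g)$, the first summand lies in $\mathfrak a$ trivially, and the second lies in $\mathfrak a$ whenever $D(g) \in \mathfrak a$. Hence the whole lemma comes down to checking that $D_1(g)$ and $D_2(g)$ each lie in $\mathfrak a$, and it will in fact be enough to exhibit each as an explicit $R'$-multiple of $g$.

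For the two computations I would differentiate $g$ using $\partial_u g = m u^{m-1}$ and $\partial_t g = -P'(t)$, then substitute into the definitions of the $D_i$. For $D_1$ the two resulting terms are $\tfrac{P'(t)}{m}\cdot m u^{m-1}$ and $u^{m-1}\cdot(-P'(t))$, which cancel, so $D_1(g) = 0 \in \mathfrak a$. For $D_2$ the terms are $\tfrac{uP'(t)}{m}\cdot m u^{m-1} = P'(t)u^m$ and $P(t)\cdot(-P'(t))$, which combine to $P'(t)\,(u^m - P(t)) = P'(t)\,g \in \mathfrak a$. Together with the reduction above this establishes $D_i(\mathfrak a)\subset\mathfrak a$ for $i = 1,2$.

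There is essentially no genuine obstacle here; the content lies entirely in the reduction step and in the fact that the coefficients of $D_1$ and $D_2$ are tuned precisely so that $g$ divides $D_i(g)$. Conceptually I would remark that $\tfrac1m$ times $D_1$ is the Hamiltonian vector field $(\partial_u g)\partial_t - (\partial_t g)\partial_u$ of $g$, which is tangent to the affine curve $u^m = P(t)$ and therefore automatically annihilates the defining function; and that modulo $\mathfrak a$ one has $D_2 \equiv u\,D_1$ (since $u^m \equiv P(t)$), which explains on structural grounds why $D_2$ also preserves the ideal. The only point requiring minor care is that we work in $R' = \mathbb C[t,t^{-1},u]$ rather than a polynomial ring, but $t^{-1}$ is a unit and all operations above take place inside $R'$, so localizing in $t$ causes no difficulty.
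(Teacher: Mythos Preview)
Your proof is correct; the paper itself states this lemma without proof, treating it as routine, so there is nothing to compare against. Your reduction via the Leibniz rule to checking $D_i(g)\in(g)$ for the single generator $g=u^m-P(t)$, followed by the two direct computations $D_1(g)=0$ and $D_2(g)=P'(t)\,g$, is exactly the natural argument one would supply, and your closing remark that $D_2\equiv uD_1\bmod\mathfrak a$ is precisely the observation the paper records immediately after the lemma.
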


 Hence $D_1$ and $D_2$ descend to derivations of ring $R'/\mathfrak a$ which we still denote by $D_1$ and $D_2$.  Moreover $D_2(r)=uD_1(r)\mod \mathfrak a$ for all $r\in R'$.
  
\begin{lem}  Let  $R:=R'/\mathfrak a=\mathbb C[t,t^{-1},u|u^m=P(t)]$ where $P(t)$ and $P'(t)$ are relatively prime and $m$ is a positive integer greater than one, then
$$
\text{Der}_{\mathbb C}R=R\left( P'(t) \frac{\partial }{\partial u}+mu^{m-1}\frac{\partial}{\partial t}\right).
$$  
\end{lem}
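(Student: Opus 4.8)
The plan is to identify $\text{Der}_{\mathbb C}R$ with an explicit $R$-submodule of $R^2$ and then show that submodule is free of rank one with the stated generator. A $\mathbb C$-derivation $\partial$ of $R$ is determined by the pair $(a,b):=(\partial t,\partial u)\in R^2$, since $R$ is generated as a $\mathbb C$-algebra by $t,t^{-1},u$ and $\partial(t^{-1})=-t^{-2}a$ is then forced. The only relation among these generators is $u^m=P(t)$ (the relation $t\cdot t^{-1}=1$ imposes nothing further), and applying $\partial$ to it gives the single constraint $mu^{m-1}b=P'(t)\,a$ in $R$. Conversely, lifting to $R'=\mathbb C[t,t^{-1},u]$, where $\text{Der}_{\mathbb C}R'$ is free on $\partial/\partial t,\partial/\partial u$, the standard computation of derivations of a hypersurface quotient shows that any pair $(a,b)\in R^2$ satisfying this relation descends to a well-defined derivation of $R'/\mathfrak a=R$ (the lifted derivation preserves $\mathfrak a=(u^m-P)$ precisely because the relation holds modulo $\mathfrak a$). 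Thus
$$\text{Der}_{\mathbb C}R\;\cong\;M:=\{(a,b)\in R^2 \mid mu^{m-1}b=P'(t)\,a\},$$
and the asserted generator $D$ corresponds to the element $(a,b)=(mu^{m-1},P')\in M$. It remains to prove $M=R\cdot(mu^{m-1},P')$ and that this is free of rank one.

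To analyze $M$ I would exploit that $R$ is a free $\mathbb C[t,t^{-1}]$-module with basis $1,u,\dots,u^{m-1}$, since $u^m=P(t)$ reduces all higher powers. Writing $a=\sum_{i=0}^{m-1}a_iu^i$ and $b=\sum_{i=0}^{m-1}b_iu^i$ with $a_i,b_i\in\mathbb C[t,t^{-1}]$, and reducing $u^{m-1}u^i=P(t)\,u^{i-1}$ for $i\ge 1$, the relation $mu^{m-1}b=P'a$ becomes, after comparing coefficients of each power of $u$,
$$m\,b_0=P'(t)\,a_{m-1},\qquad m\,P(t)\,b_{i+1}=P'(t)\,a_i\quad(0\le i\le m-2).$$
These equations determine every $b_i$ from the $a_i$, so an element of $M$ amounts to a choice of $a_0,\dots,a_{m-1}\in\mathbb C[t,t^{-1}]$ for which the resulting $b_i$ again lie in $\mathbb C[t,t^{-1}]$; in particular each element of $M$ is determined by its first coordinate $a$.

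The key point is the integrality condition on $b_{i+1}=P'a_i/(mP)$, and here I would invoke the hypothesis $\gcd(P,P')=1$. Since $\mathbb C[t,t^{-1}]$ is a localization of the PID $\mathbb C[t]$, coprimality of $P$ and $P'$ persists there (a Bézout identity $\alpha P+\beta P'=1$ survives localization), so $P\mid P'a_i$ forces $P\mid a_i$ for $0\le i\le m-2$, while $a_{m-1}$ is unconstrained and the factor $m\in\mathbb C^\times$ causes no trouble. Writing $a_i=P\,\tilde a_i$ for $i\le m-2$ and setting $r:=m^{-1}a_{m-1}+\sum_{i=0}^{m-2}m^{-1}\tilde a_i\,u^{i+1}\in R$, a direct check using $u^{m-1}u^{i+1}=Pu^i$ shows that $r\cdot(mu^{m-1},P')$ has first coordinate exactly $a$, hence equals $(a,b)$ by the previous paragraph; moreover $r$ is uniquely recovered from $(a,b)$. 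This gives $M=R\cdot D$ and shows $r\mapsto rD$ is an isomorphism $R\xrightarrow{\sim}\text{Der}_{\mathbb C}R$, so $\text{Der}_{\mathbb C}R$ is free of rank one on $D=P'(t)\,\partial/\partial u+mu^{m-1}\,\partial/\partial t$. The main obstacle is the bookkeeping in the coefficient comparison together with the clean deployment of coprimality to convert the integrality conditions into the divisibilities $P\mid a_i$; the remaining verifications are routine.
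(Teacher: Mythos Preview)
The paper states this lemma without proof, so there is no argument in the text to compare against; your task is simply to supply one, and you have. Your identification of $\text{Der}_{\mathbb C}R$ with $M=\{(a,b)\in R^2 : mu^{m-1}b=P'(t)a\}$ is the standard description of derivations of a hypersurface quotient, and your coefficient comparison over the free $\mathbb C[t,t^{-1}]$-basis $1,u,\dots,u^{m-1}$ is correct: the equations $mb_0=P'a_{m-1}$ and $mPb_{i+1}=P'a_i$ for $0\le i\le m-2$ are exactly what one obtains after reducing $u^{m-1+i}=Pu^{i-1}$. The use of $\gcd(P,P')=1$ in the PID $\mathbb C[t,t^{-1}]$ to deduce $P\mid a_i$ for $i\le m-2$ is the decisive step, and your construction of $r$ together with the verification $r\cdot(mu^{m-1},P')=(a,b)$ is accurate. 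Uniqueness is fine as stated, since the $\tilde a_i$ and $a_{m-1}$ are uniquely recovered from $a$ in the domain $\mathbb C[t,t^{-1}]$. This is the natural proof and almost certainly what the authors intended.
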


Here are four examples of such algebras and coordinate rings $R$ with $m=2$ that appear in the literature:
\begin{enumerate}
\item The three point algebras have $P(t)=t^2+4t$. 
\item Four point algebras have $P(t)=t^2-2bt+1$, $b\neq \pm 1$.
\item Elliptic affine algebras have $P(t)=t^3-2bt^2+t$, $b\neq \pm 1$.
\item The coordinate algebras with $P(t)=(t^2-b^2)(t^2-c^2)$ appear in the work of Date, Jimbo, Kashiwara et al on Landau-Lipschitz differential equations,  $b\neq \pm c$ and $bc\neq 0$. If these extra conditions hold for $b$ and $c$, then $P(t)$ and $P'(t)=2t(2t^2-b^2-c^2)$ are relatively prime.
\end{enumerate}

\section{3-point Witt algebra representation}
We now construct a representation using the oscillator algebra. 
Define $\pi:\text{Der}(R)\to \text{End}(\mathbb C[\mathbf x])$ by the following 
\begin{align*}
\pi(d_m)&=\sum_{j}(j-m):a_{j+1}a_{m-j}^*:+4\sum_j(j-m):a_ja_{m-j}^*: \\
	&\qquad+\sum_j(j+1-m):a_{j+1}^1a_{m-j}^{1*}:+4\sum_j(j+\frac{1}{2}-m):a_j^1a_{m-j}^{1*}: ,\\
\pi(d^1_m)&=\sum_{j}(j-m):a_{j-1}^1a_{m-j}^*: \\
	&\qquad+\sum_j(j+1-m):a_{j+1}a_{m-j}^{1*}:+4\sum_j(j+\frac{1}{2}-m):a_ja_{m-j}^{1*}: .
\end{align*}
We then have in terms of formal power series \eqnref{alpha} and \eqnref{alpha1}
\begin{align} 
\pi(d)(z):
&=P(z)\left(:\alpha(z)\partial_z\alpha^*(z): +:\alpha^1(z)\partial_z\alpha^{1*}(z):\right ) \label{eq:pi1}\\  
& \quad +\frac{1}{2}\partial_zP(z):\alpha^1(z)\alpha^{1*}(z):  \notag \\
  \pi(d^1)(z):  
	&=\, :\alpha^1(z)\partial_z\alpha^*(z):+P(z):\alpha(z)\partial_z\alpha^{1*}(z):   \label{eq:pi2}\\
	&  \quad  +\frac{1}{2}\partial_zP(z)    
	:\alpha(z)\alpha^{1*}(z):  \notag
\end{align}
 
 The following computational lemma is useful for manipulating $\lambda$-brackets of our formal distributions, we omit the proof which is routine but lengthy. Note also that similar formulae hold for other combinations of operators and formal derivatives, which are used in the proof of Proposition \ref{3ptWittrep}  and our main result, Theorem \ref{mainresult}. 
\begin{lem}\label{auxiliary lemma} The following relations hold for the elements of the oscillator algebra of Section \ref{oscilllatoralg}
\begin{enumerate} 
\item
$[:\alpha\alpha^*:{_\lambda}:\alpha\alpha^*:]=-\delta_{r,0}\lambda$,
\item
\begin{align*}
[P:\alpha\partial(\alpha^*):{_\lambda} & P:\alpha\partial(\alpha^*):] \\
& =P\left(P:\partial(\alpha^*)\alpha:\lambda+ \partial P:\partial(\alpha^*)\alpha:+P:\partial^2(\alpha^*)
\alpha:\right)\\\
&\quad +P\left(P:\alpha\partial(\alpha^*):\lambda +:\alpha\partial(\alpha^*):\partial P+P:\partial
\alpha\partial(\alpha^*):\right) \\
&\quad +P\left(\frac{1}{6}P\lambda^3+\frac{1}{2}\partial P\lambda^2+\frac{1}{2}\partial^2P\lambda\right)\delta_{r,0}\end{align*}
\item 
\begin{align*}
[P:\alpha^1\partial(\alpha^{1*}):{_\lambda} & \partial P:\alpha^1\alpha^{1*}:] \\
 &=P\partial P:\partial(\alpha^{1*})\alpha^1: +\left(P\lambda+\partial P\right)\partial P:\alpha^1 \alpha^{*1}: \\ 
&\quad + P\partial P:\partial\alpha^1 \alpha^{*1}:+ \delta_{r,0}\left(\frac{1}{2}P\lambda^2+ \partial P\lambda+\frac{1}{2}\partial^2P\right)\partial P\end{align*}
\item 
\begin{align*}
[\partial  P:\alpha^1\alpha^{1*}:{_\lambda} & P:\alpha^1\partial(\alpha^{1*}):] \\
&=
P\partial P:  \alpha^{1*}\alpha^1:\lambda +P (\partial^2 P:\alpha^{1*}\alpha^1:+\partial P:\partial (\alpha^{1*})\alpha^1:) \\
&\quad -P\partial  P:\alpha^1\partial \alpha^{1*}:  
-\delta_{r,0}P\left(\frac{1}{2}\partial P\lambda^2+\partial^2 P\lambda\right)
\end{align*}
\end{enumerate}
\end{lem}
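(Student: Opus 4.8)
The plan is to work entirely within the $\lambda$-bracket calculus for the mutually local fields $\alpha,\alpha^*,\alpha^1,\alpha^{1*}$, reducing every bracket in the statement to the elementary brackets of these four generators. From the operator product expansions recorded in \secref{oscilllatoralg} the only nonzero elementary brackets are $[\alpha_\lambda\alpha^*]=\mathbf 1=[\alpha^1_\lambda\alpha^{1*}]$ together with their skew-symmetric partners $[\alpha^*_\lambda\alpha]=-\mathbf 1=[\alpha^{1*}_\lambda\alpha^1]$, all other brackets among the generators vanishing. The dependence on the normal-ordering parameter $r$ is carried entirely by the contractions $\lfloor\,\cdot\,\rfloor$ of \eqnref{contraction}, which differ for $r=0$ and $r=1$, and these are ultimately the source of every $\delta_{r,0}$ appearing on the right-hand sides.

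First I would assemble the three structural rules that propagate the elementary brackets through the normal-ordered products. These are sesquilinearity, $[\partial a_\lambda b]=-\lambda[a_\lambda b]$ and $[a_\lambda\partial b]=(\partial+\lambda)[a_\lambda b]$, used to absorb the formal derivatives $\partial\alpha^*,\partial\alpha^{1*}$; the non-commutative Wick formula
\[
[a_\lambda :bc:]=:[a_\lambda b]\,c:+:b\,[a_\lambda c]:+\int_0^\lambda[[a_\lambda b]_\mu c]\,d\mu,
\]
applied in the right-hand entry and, after skew-symmetry $[b_\lambda a]=-[a_{-\partial-\lambda}b]$, in the left-hand entry as well; and Taylor's Theorem (recalled in the appendix), which governs how the scalar coefficient $P$ is moved across a bracket. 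The last point is the mechanism that creates the $\partial P$ and $\partial^2 P$ terms: a factor $P=P(z)$ attached to a field must be expanded as $P(z)=\sum_k\frac{1}{k!}(z-w)^k\partial^kP(w)$ against the poles $(z-w)^{-j-1}$ coming from the contractions, so each order of pole that is lowered promotes one power of $(z-w)$ into one further derivative of $P$. With these rules in hand, part (1) is the base case: applying the formulas, the non-central normal-ordered contributions cancel and only the central term $-\delta_{r,0}\lambda$, produced by the fully contracted diagram, survives.

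Parts (2)--(4) are then the same computation carried out with the added bookkeeping of the derivative and the polynomial coefficient. For (2) one applies the Wick formula to $[P:\alpha\partial(\alpha^*):{}_\lambda P:\alpha\partial(\alpha^*):]$: the two non-central strings of normal-ordered products arise from contracting $\alpha$ against $\partial(\alpha^*)$ in each of the two possible ways, with the derivatives tracked by sesquilinearity and the two copies of $P$ by Taylor expansion, while the triple $\frac16P\lambda^3+\frac12\partial P\lambda^2+\frac12\partial^2P\lambda$ is precisely the fully contracted contribution, surviving only for $r=0$. Parts (3) and (4) are the mixed brackets pairing $P:\alpha^1\partial(\alpha^{1*}):$ against $\partial P:\alpha^1\alpha^{1*}:$; here one must additionally invoke the skew-symmetry relation to pass between the two, which exchanges the order of the surviving normal-ordered factors and accounts for the sign differences and the interchange $:\partial(\alpha^{1*})\alpha^1:\leftrightarrow:\alpha^1\partial\alpha^{1*}:$ visible between the two displayed identities.

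The main obstacle is not any single bracket but the simultaneous accurate accounting of the anomalous terms and the Taylor expansion of $P$. Because the normal-ordered product is neither commutative nor associative, one must be scrupulous about the order in which the surviving generators appear, in particular distinguishing $:\partial(\alpha^*)\alpha:$ from $:\alpha\partial(\alpha^*):$, and the $\delta_{r,0}$ central terms---coming exactly from the fully contracted diagrams, whose value depends on whether $\lfloor\alpha\alpha^*\rfloor$ is the $r=0$ or the $r=1$ contraction---must be matched order by order in $\lambda$ against the Taylor coefficients $P,\partial P,\partial^2P$. This is the ``routine but lengthy'' bookkeeping the statement warns of; organized by the pair (number of contractions, power of $\lambda$) it becomes mechanical, and the four identities fall out.
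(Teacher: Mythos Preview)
Your proposal is correct and matches the approach the paper intends: the paper explicitly omits the proof as ``routine but lengthy,'' and the tools it singles out in the appendix (Wick's Theorem and Taylor's Theorem) together with the $\lambda$-bracket formalism are exactly the ingredients you describe. Your organization---reducing everything to the elementary brackets $[\alpha_\lambda\alpha^*]=\mathbf 1$ and $[\alpha^1_\lambda\alpha^{1*}]=\mathbf 1$, then propagating via sesquilinearity, the non-commutative Wick formula, skew-symmetry, and Taylor expansion of the scalar factor $P$---is precisely the mechanical bookkeeping the paper has in mind, and your identification of the $\delta_{r,0}$ terms as arising from the fully contracted diagrams (which vanish when $r=1$ because $\lfloor\alpha\alpha^*\rfloor=0$ in that ordering) is on target.
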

\qed

\begin{prop} \label{3ptWittrep}  Given $\pi$ as in \eqref{eq:pi1} and \eqref{eq:pi2} we have
\begin{align*}
[\pi(d)_\lambda \pi(d)]
 &=P\partial  \pi(d)+\partial P\pi(d)+2P\pi(d)\lambda \\
&\quad +P\left(\frac{1}{3}P\lambda^3+\partial P\lambda^2+\frac{1}{2}\partial^2P\lambda\right)\delta_{r,0}  +\frac{1}{4} \delta_{r,0}(\partial P)^2\lambda \\
[\pi(d^1)_\lambda\pi(d^1)]&=\partial\pi(d)+2\pi(d)\lambda+\left(\frac{1}{3}P\lambda^3+\frac{1}{2}\partial P\lambda^2\right)\delta_{r,0},  \\
[\pi(d)_\lambda \pi(d^1)]&=P\partial \pi(d^1)  +\frac{3}{2}\partial P\pi(d^1) +2P \pi(d^1)\lambda.
\end{align*}
In particular $\pi$ is a representation of the $3$-point Witt algebra if $r=0$.
\end{prop}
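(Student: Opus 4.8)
The plan is to compute the three $\lambda$-brackets directly from the generating series \eqref{eq:pi1} and \eqref{eq:pi2}, and then read off the ``in particular'' claim by comparison with \lemref{commutators}. Since $\pi(d)(z)$ and $\pi(d^1)(z)$ are each a finite sum of normally ordered quadratic expressions in the fields $\alpha,\alpha^*,\alpha^1,\alpha^{1*}$ weighted by the polynomials $P$ and $\tfrac12\partial P$, I would first expand each bracket bilinearly into $\lambda$-brackets of pairs of such normally ordered products. The only nonvanishing contractions are $\lfloor\alpha\alpha^*\rfloor$ and $\lfloor\alpha^1\alpha^{1*}\rfloor$ together with their reverses, as computed in \secref{oscilllatoralg}, so the large majority of cross terms vanish at once and the surviving contributions reduce to a short list.

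For each surviving pair I would invoke Wick's theorem (stated in the appendix) to split the bracket into its single-contraction part and its double-contraction (scalar) part. The single-contraction terms are exactly the ones recorded by the auxiliary \lemref{auxiliary lemma} for the constituent blocks, such as $[P:\alpha\partial(\alpha^*):{_\lambda} P:\alpha\partial(\alpha^*):]$ and the mixed $\partial P$ terms; summing them over the four pieces of $\pi(d)$ (respectively the three pieces of $\pi(d^1)$) and then applying Taylor's theorem to push the coefficient polynomials $P,\partial P,\partial^2P$ through the bracket collapses everything into the claimed closed forms, with the part linear in $\pi(d),\pi(d^1)$ assembling into $P\partial\pi(d)+\partial P\,\pi(d)+2P\pi(d)\lambda$ and its two analogues. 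The double-contraction terms are precisely the scalar $\delta_{r,0}$ contributions; by the contraction formulas of \secref{oscilllatoralg} (equivalently \lemref{auxiliary lemma}(1)) these are nonzero only in the usual normal ordering and die in the natural one.

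The hard part is the bookkeeping of the polynomial prefactors. Because products such as $P\partial^2P$ and $(\partial P)^2$ are generated when two coefficients collide under a double contraction, and because Taylor's theorem redistributes a factor $P(z)$ into $P(w)$ plus $\lambda$-corrections, every term must be regrouped carefully before it can be matched against the cubic, quadratic, and linear powers of $\lambda$ in the stated formulas; the mixed bracket $[\pi(d)_\lambda\pi(d^1)]$ is the most delicate, since there the scalar parts must cancel completely. To finish, I would rewrite the relations of \lemref{commutators} in $\lambda$-bracket form: expanding $[d(z),d(w)]$, $[d^1(z),d^1(w)]$, and $[d(z),d^1(w)]$ from those mode relations reproduces exactly the terms linear in $\pi(d)$ and $\pi(d^1)$ found above. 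Hence $\pi$ intertwines the bracket up to the purely scalar $\delta_{r,0}$ terms; since these are the double contractions, which vanish in the relevant normal ordering, we obtain $[\pi(d_m),\pi(d_n)]=\pi([d_m,d_n])$ together with the two remaining families of relations, establishing the asserted homomorphism.
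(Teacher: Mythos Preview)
Your approach to the three $\lambda$-bracket identities is essentially the paper's: expand bilinearly into $\lambda$-brackets of the normally ordered quadratic pieces, reduce via Wick's theorem to the building-block identities recorded in \lemref{auxiliary lemma}, and then regroup the polynomial prefactors $P,\partial P,\partial^2P$ using Taylor's theorem. The paper carries out only $[\pi(d)_\lambda\pi(d)]$ in full and declares the remaining two analogous, which matches your outline.

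One point in your last paragraph needs sharpening. You deduce the ``in particular'' clause by asserting that the scalar $\delta_{r,0}$ contributions ``vanish in the relevant normal ordering,'' and from this conclude that $\pi$ is a genuine Lie algebra homomorphism. But those scalar terms vanish precisely when $r=1$ (the natural ordering, in which $\lfloor\alpha\alpha^*\rfloor=0$ so no double contraction survives), and are \emph{present} when $r=0$. As the statement is phrased with $r=0$, the extra scalars are nonzero and $\pi$ realizes a central extension of the Witt algebra rather than the Witt algebra itself. Either you have inverted which of the two orderings kills the double contractions, or the stated value of $r$ is a misprint for $r=1$ that your argument is silently correcting; in either case your final sentence does not square with the hypothesis $r=0$ as written, and the discrepancy should be addressed explicitly rather than hidden behind the phrase ``the relevant normal ordering.'' The paper's own proof does not comment on this clause at all.
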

\begin{proof} 

The relations follow from Lemma \ref{auxiliary lemma}, and similar calculations. For example:
\begin{align*}
[\pi(d)(z)_\lambda\pi (d)(w)]&=[\Big(P(z)\left(:\alpha(z)\partial_z\alpha^*(z): +:\alpha^1(z)\partial_z\alpha^{1*}(z):\right )+\frac{1}{2}\partial_zP(z):\alpha^1(z)\alpha^{1*}(z):\Big)_\lambda \\
&\quad  \Big(P(w)\left(:\alpha(w)\partial_w\alpha^*(w): +:\alpha^1(w)\partial_w\alpha^{1*}(w):\right )+\frac{1}{2}\partial_wP(w):\alpha^1(w)\alpha^{1*}(w):\Big) ]\\
&=[P:\alpha \partial \alpha^*:_\lambda P:\alpha \partial \alpha^*:]+[P:\alpha^1 \partial \alpha^{1*}:_\lambda P:\alpha^1 \partial \alpha^{1*}:]+\frac{1}{2}[P:\alpha^1 \partial \alpha^{1*}:_\lambda \partial P:\alpha^1\alpha^{1*}:] \\
&\quad +\frac{1}{2}[\partial P:\alpha^1\alpha^{1*}:_\lambda P:\alpha^1 \partial \alpha^{1*}: ] +\frac{1}{4}[\partial P:\alpha^1\alpha^{1*}:_\lambda \partial P:\alpha^1\alpha^{1*}:]\\ \\
&=P\left(P:\partial(\alpha^*)\alpha:\lambda+ \partial P:\partial(\alpha^*)\alpha:+P:\partial^2(\alpha^*)
\alpha:\right)\\\
&\quad +P\left(P:\alpha\partial(\alpha^*):\lambda +:\alpha\partial(\alpha^*):\partial P+P:\partial
\alpha\partial(\alpha^*):\right) \\
&\quad +P\left(\frac{1}{6}P\lambda^3+\frac{1}{2}\partial P\lambda^2+\frac{1}{2}\partial^2P\lambda\right)\delta_{r,0} \\
&\quad +P\left(P:\partial(\alpha^{1*})\alpha^1:\lambda+ \partial P:\partial(\alpha^{1*})\alpha^1:+P:\partial^2(\alpha^{1*})\alpha^1:\right)\\\
&\quad +P\left(P:\alpha^1\partial(\alpha^{1*}):\lambda +:\alpha^1\partial(\alpha^{1*}):\partial P+P:\partial
\alpha^1\partial(\alpha^{1*}):\right) \\
&\quad +P\left(\frac{1}{6}P\lambda^3+\frac{1}{2}\partial P\lambda^2+\frac{1}{2}\partial^2P\lambda\right)\delta_{r,0} \\
&\quad +\frac{1}{2}\Big(P\partial P:\partial(\alpha^{*1})\alpha^1: +\left(P\lambda+\partial P\right)\partial P:\alpha^1 \alpha^{*1}: \\ 
&\qquad + P\partial P:\partial\alpha^1 \alpha^{*1}:+ \delta_{r,0}\left(\frac{1}{2}P\lambda^2+ \partial P\lambda+\frac{1}{2}\partial^2P\right)\partial P\Big) \\
&\quad +\frac{1}{2}\Big(P\partial P:  \alpha^{1*}\alpha^1:\lambda +P (\partial^2 P:\alpha^{1*}\alpha^1:+\partial P:\partial (\alpha^{1*})\alpha^1:) \\
&\qquad -P\partial  P:\alpha^1\partial \alpha^{1*}:  
-\delta_{r,0}P\left(\frac{1}{2}\partial P\lambda^2+\partial^2 P\lambda\right)\Big) \\
&\quad -\frac{1}{4} \delta_{r,0}\left(\partial P\lambda+\partial^2P\right)\partial P \\ \\
&=P\partial  \left(P\left(:\alpha\partial\alpha^*: +:\alpha^1\partial\alpha^{1*}:\right )+\frac{1}{2}\partial P:\alpha^1\alpha^{1*}: \right) \\
&\quad +\partial P\left(P\left(:\alpha\partial\alpha^*: +:\alpha^1\partial\alpha^{1*}:\right )+\frac{1}{2}\partial P:\alpha^1\alpha^{1*}: \right) \\
&\quad +2P\left(P\left(:\alpha\partial\alpha^*: +:\alpha^1\partial\alpha^{1*}:\right )+\frac{1}{2}\partial P:\alpha^1\alpha^{1*}: \right)\lambda \\
&\quad +P\left(\frac{1}{3}P\lambda^3+\partial P\lambda^2+\frac{1}{2}\partial^2P\lambda\right)\delta_{r,0}  +\frac{1}{4} \delta_{r,0}(\partial P)^2\lambda  \\
&=P\partial  \pi(d)+\partial P\pi(d)+2P\pi(d)\lambda \\
&\quad +P\left(\frac{1}{3}P\lambda^3+\partial P\lambda^2+\frac{1}{2}\partial^2P\lambda\right)\delta_{r,0}  +\frac{1}{4} \delta_{r,0}(\partial P)^2\lambda 
\end{align*}

\end{proof}
 
\section{The 3-point Virasoro algebra}\label{3pt2cocycle}

In this section, we construct the universal central extension of the 3-point Witt algebra, which we call the 3-point Virasoro algebra. Note that the cocycles of the 3-point Witt algebra given below correspond to the ones in \cite{MR3211093} where they are given in the basis for $\text{Der}(S)$, as shown in \cite{JM}.

Recall $R=\mathbb C[t,t^{-1},u\,|\, u^2=t^2+4t]$, with basis given in \eqref{basis}.
We define cocycles $\phi_i: \text{Der}(R)\times \text{Der}(R)\to \mathbb C$ for $i= 1,2$. 

On the basis elements, for all $k,l\in\mathbb Z$ let:
\begin{align}
\phi_1(d^1_k, d^1_l)=\phi_1(t^kD,t^lD)\label{phiee}
&:=2 (l^2-l) (2 l-1)\delta_{k+l,1} +(l^3-l)\delta_{k+l,0}    \\
\phi_1(d^1_k,d_l)= \phi_1(t^kD,t^luD)&:=6(-1)^{k+l}2^{k+l}(k-1)kl  \dfrac{(2k+2l-3)!!}{ (k+l+1)!}\\
\phi_1(d_l,d^1_k)&  :=- \phi_1(d^1_k,d_l)\notag \\
\phi_1(d_k,d_l)= \phi_1(t^kuD,t^luD)&:=l (l+1) (l+2) \delta _{k+l,-2}+4 l (2 l+1) ( l+1) \delta _{k+l,-1}
 \label{phidd}\\
 &\hskip 100pt+4 l (2 l-1) (2 l+1) \delta _{k+l,0}.\notag 
\end{align}
where, by definition, $(2k+2l-3)!!=(2k+2l-3)\cdot(2k+2l-5)\cdot ... \cdot5\cdot3\cdot1$.

We extend linearly to all of $\text{Der}(R)\times \text{Der}(R)$.

Define $\phi_2 : \text{Der}(R)\times \text{Der}(R)\to \mathbb C$   on the basis elements for all $k,l\in\mathbb Z$ as
 \begin{align*}
 \phi_2(d^1_k, d^1_l)= \phi_2(t^kD,t^lD)& := -2\phi_1(t^kD,t^lD)\\
\phi_2(d_k, d_l)= \phi_2(t^kuD,t^luD)& := -2\phi_1(t^kuD,t^luD)\\ 
\phi_2(d^1_k, d_l)= \phi_2 (t^kD,t^luD) &:= -\phi_2 (t^kD,t^luD) = 0 
\end{align*}
and extend linearly  (see also  \cite{MR3211093}).

\begin{prop}The above defined $\phi_i$ are linearly independent $2$-cocycles on $\text{Der}(R)$ which are not $2$-coboundaries for all $i=1,2$.
\end{prop}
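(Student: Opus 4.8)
The plan is to establish the three assertions---cocycle, non-coboundary, and linear independence---in that order, using throughout the $\mathbb Z/2$-grading
\[
\text{Der}(R)=\mathfrak g_{\bar 0}\oplus\mathfrak g_{\bar 1},\qquad \mathfrak g_{\bar 0}=\bigoplus_{n}\mathbb C\,d_n,\quad \mathfrak g_{\bar 1}=\bigoplus_{n}\mathbb C\,d^1_n,
\]
induced by $R=\mathbb C[t^{\pm1}]\oplus u\,\mathbb C[t^{\pm1}]$: because $u^2=t^2+4t\in\mathbb C[t^{\pm1}]$, the operators $d_n=t^nuD$ preserve this parity while the $d^1_n=t^nD$ reverse it, and the brackets of \lemref{commutators} confirm $[\mathfrak g_{\bar\imath},\mathfrak g_{\bar\jmath}]\subseteq\mathfrak g_{\overline{\imath+\jmath}}$. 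First I would dispose of skew-symmetry, which is immediate: on mixed pairs it is built into the definition, and on $d^1$-$d^1$ pairs (nonzero only for $k+l\in\{0,1\}$) and $d$-$d$ pairs (nonzero only for $k+l\in\{-2,-1,0\}$) the substitution $k\leftrightarrow l$ sends each surviving summand to its negative.

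For the cocycle condition I would exploit that the Chevalley--Eilenberg differential has degree $\bar0$ for this grading, so a cochain is a cocycle precisely when its even part (supported on equal-parity pairs) and its odd part (supported on opposite-parity pairs) are separately cocycles. Writing $\phi_1=\phi_1^{\bar0}+\phi_1^{\bar1}$ one checks directly that $\phi_2=-2\phi_1^{\bar0}$; hence it suffices to prove that $\phi_1$ is a cocycle, after which $\phi_1^{\bar0}$, and therefore $\phi_2$, are automatically cocycles. The identity $\phi_1([X,Y],Z)+\phi_1([Y,Z],X)+\phi_1([Z,X],Y)=0$ then splits, according to the parities of the basis triple, into four families: $(d,d,d)$ and $(d,d^1,d^1)$ test $\phi_1^{\bar0}$, while $(d,d,d^1)$ and $(d^1,d^1,d^1)$ test $\phi_1^{\bar1}$. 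Inserting the brackets of \lemref{commutators}, the first two families collapse to polynomial identities in the indices that are checked termwise. The hard part will be the last two families, where the mixed coefficient $(-1)^{k+l}2^{k+l}(k-1)k\,l\,(2k+2l-3)!!/(k+l+1)!$ enters and the alternating sum becomes a recurrence among double factorials of exactly the type controlling the constants $\mu_{m,n}$ and the ultraspherical expansions; the efficient route I would actually take is to transport the whole cocycle identity through the isomorphism $\mathcal R\cong\mathcal S$, since the $\phi_i$ are the images of the cocycles of \cite{MR3211093} in the $\text{Der}(\mathcal S)$-basis computed in \cite{JM}, and the cocycle property is preserved by any Lie algebra isomorphism.

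To see that neither $\phi_i$ is a coboundary, suppose $\phi_1(X,Y)=f([X,Y])$ for a linear functional $f$ on $\text{Der}(R)$. Evaluating at $X=d_k$, $Y=d_{-k}$ and using $[d_k,d_{-k}]=-2k(d_1+4d_0)$ gives $f([d_k,d_{-k}])=-2k(f(d_1)+4f(d_0))$, which is \emph{linear} in $k$; but
\[
\phi_1(d_k,d_{-k})=4(-k)(-2k-1)(-2k+1)=-4k(4k^2-1)
\]
is \emph{cubic} in $k$, so the two cannot agree for all $k$. The identical computation, now with $\phi_2(d_k,d_{-k})=8k(4k^2-1)$, rules out $\phi_2$.

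Finally, for linear independence I would take a vanishing combination $c_1\phi_1+c_2\phi_2=0$ and evaluate it on a mixed pair where $\phi_2$ vanishes, e.g.\ $(d^1_2,d_1)$: since $\phi_1(d^1_2,d_1)=-12\neq0$ this forces $c_1=0$. Then $c_2\phi_2=0$, and evaluating on $(d_1,d_{-1})$, where $\phi_2(d_1,d_{-1})=24\neq0$, forces $c_2=0$. Hence $\phi_1$ and $\phi_2$ are linearly independent, which completes the argument.
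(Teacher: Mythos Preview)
Your argument is correct, and its overall structure is close to the paper's, but two ingredients are genuinely different and worth noting.

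First, you organize everything via the $\mathbb Z/2$-grading $\text{Der}(R)=\mathfrak g_{\bar0}\oplus\mathfrak g_{\bar1}$ and observe that $\phi_2=-2\phi_1^{\bar0}$, so the even part of a cocycle is automatically a cocycle and hence $\phi_2$ comes for free once $\phi_1$ is handled. The paper does not make this grading explicit; it simply says ``$\phi_2$ follows'' and ``is clear from the definition,'' so your formulation supplies the clean structural reason behind those phrases. For the cocycle identity itself both approaches ultimately rest on the same device: the paper verifies one representative case by hand and invokes ``similar calculations,'' while also recording (just after the proof) that the $\phi_i$ are by construction the pullbacks of the cocycles of \cite{MR3211093} along $\Phi_f$; you propose to do the even-parity families by direct polynomial identities and the odd-parity families via that same isomorphism. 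So on this point the two routes coincide.

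Second, for the non-coboundary assertion the paper argues purely by transport: since the $\phi_{S,i}$ are not coboundaries on $\text{Der}(S)$ and $\Phi_f$ is a Lie algebra isomorphism, their pullbacks cannot be coboundaries on $\text{Der}(R)$. Your argument is instead intrinsic and elementary: a coboundary $f([d_k,d_{-k}])=-2k\bigl(f(d_1)+4f(d_0)\bigr)$ is linear in $k$, while $\phi_1(d_k,d_{-k})=-4k(4k^2-1)$ is cubic. This avoids any appeal to \cite{MR3211093} or \cite{JM} for this step and is a nice self-contained alternative. Your linear-independence check (evaluate on a mixed pair to kill $c_1$, then on $(d_1,d_{-1})$ to kill $c_2$) is exactly the content of the paper's ``clear from the definition.''
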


\begin{proof}
We will prove the result for  $\phi_1: \text{Der}(R)\times \text{Der}(R)\to \mathbb C$, and $\phi_2$ follows. It is easy to verify that $\phi_1$ is skew symmetric

The cocycle condition  
$
 \phi_1([a,b],c) +  \phi_1([b,c],a) +  \phi_1([c,a],b) = 0
$
for all $a,b,c \in \text{Der}(R)$, can be verified in each case. 
We use the notation of \eqref{basis}, and the commutators given in Lemma \ref{commutators}. For example
 for $m,n,r \in \mathbb Z$
\begin{align*}
\phi_1([d^1_m,d^1_n], d_r)  + &\phi_1([d^1_n,d_r], d^1_m)+ \phi_1([d_r,d^1_m], d^1_n) \\
  = &\phi_1((n-m)d_{m+n-1}, d_r) +\phi_1( ( r-n+1 )d^1_{r+n+1}, d^1_m) + \phi_1( (-4n+4r+2)d^1_{n+r}, d^1_m)\\ & + \phi_1( ( m-r-1)d^1_{r+m+1}, d^1_n)
 + \phi_1( (4m-4r-2)d^1_{m+r}, d^1_n )\\
  =& (n-m)(r(r+1)(r+2)\delta_{m+n+r, -1} + 4r(2r+1)(r+1) \delta_{m+n+r,0} \\
&\quad + 4r (2r-1)(2r+1) \delta_{m+n+r,1})\\
&\quad + ( r-n+1 )( 2(m^2-m)(2m-1)\delta_{r+n+m,0} + (m^3-m)\delta_{m+n+r,-1})\\
& \quad + (-4n+4r+2) (2(m^2-m)(2m-1)\delta_{m+n+r,1} + (m^3-m)\delta_{m+n+r,0})\\ 
&\quad +   ( m-r-1)(2 (n^2-n)(2n-1)\delta_{n+m+r,0} + (n^3-n) \delta_{n+r+m,-1})\\
& \quad+  (4m-4r-2)( 2(n^2-n)(2n-1) \delta_{m+n+r,1} + (n^3-n) \delta_{m+n+r,0} )\\
& = 0
\end{align*}
The other cases follow by similar calculations.

That $\phi_2$ is also a cocycle, and is linearly independent from $\phi_1$ is clear from the definition. 

\end{proof}


We point out the motivation for the definition of our cocycles. Recall the algebra $\text{Der}(S)$ where  $  S=\mathbb C[s,s^{-1},(s-1)^{-1} ]$, which is studied in \cite{MR3211093}.  Define $f:R\to S$ and $\phi:S\to R$ by 
\begin{equation}
f(t)=s^{-1}(s-1)^2,\quad f(u)=s-s^{-1},\quad \phi(s)=\frac{t+2+u}{2},\quad \phi(s^{-1})=\frac{t+2-u}{2}\label{isom1}
\end{equation}
 
The map $\Phi_f:\text{Der}(R)\to \text{Der}(S)$  defined by the following 
$$
\Phi_f(u)=fuf^{-1}, 
$$
is an isomorphism \cite{MR3245847}, 
and the definition of the cocycles of $\text{Der}(R)$ given above was arrived at by computing
\begin{equation}\label{eq:defn}
\phi_i(u,v):={\phi_{S}}_i(\Phi_f(u),\Phi_f(v)).
\end{equation}
on the basis elements, where ${\phi_S}_i$ are the cocycles defined in \cite{MR3211093}.  Because the cocycles obtained for $\text{Der}(S)$ are not co-boundaries, 
we have that the cocycles $\phi_1, \phi_2$ are not co-boundaries.


We define the $3$-point Virasoro algebra $\mathfrak V$ to be the universal central extension of $3$-point Witt algebra $\mathfrak W$, 
\begin{equation}\label{DefVir}
\mathfrak V=\mathfrak W\oplus \mathbb Cc_1\oplus \mathbb Cc_2
\end{equation}
where we distinguish the basis elements $\mathbf d_n$ of $\mathfrak V$, from the $d_n$ of $\mathfrak W$. The relations are 
\begin{align}
[\mathfrak V,\mathbb Cc_1\oplus \mathbb Cc_2]&=0,
\end{align}
\begin{align}
[\mathbf d_m,\mathbf d_n]&=(n-m)\left(\mathbf d_{m+n+1}+4\mathbf d_{m+n}\right)+\phi_1(d_m,d_n)c_1+\phi_2(d_m,d_n)c_2 \\
&=(n-m)\left(\mathbf d_{m+n+1}+4\mathbf d_{m+n}\right)\notag \\
&\quad-\Big(n(n+1)(n+2)\delta_{m+n,-2}+4n(n+1)(2n+1)\delta_{m+n,-1}+4n(2n-1)(2n+1)\delta_{m+n,0}\Big)\bar c\notag
\end{align}
\begin{align}
[\mathbf d^1_m,\mathbf d^1_n]&=(n-m)\mathbf d_{m+n-1}+\phi_1(d^1_m,d^1_n)c_1+\phi_2(d^1_m,d^1_n)c_2 \\
&=(n-m)\mathbf d_{m+n-1}\notag\\
&\quad+n(n-1)\Big((2n-1)\delta_{m+n,1}+(n+1)\delta_{m+n,0}\Big)\bar c\notag
\end{align}
\begin{align}
[\mathbf d_m,\mathbf d^1_n]&=(n-m-1)\mathbf d^1_{m+n+1} +(4n-4m-2)\mathbf d^1_{m+n} \\
&\hskip 100pt+\phi_1(t^muD,t^nD)c_1+\phi_2(d_m,d^1_n)c_2\notag \\
&=(n-m-1)\mathbf d^1_{m+n+1} +(4n-4m-2)\mathbf d^1_{m+n} \notag\\
&\quad+\Big(6(-1)^{k+l}2^{k+l}(k-1)kl  \dfrac{(2k+2l-3)!!}{ (k+l+1)!}\Big)c_1,\notag
\end{align}
where $\bar c=c_1-2c_2$. 

 If we set $\bar{\mathbf d}_m:=- {\mathbf d}_{m+1}$ and $\bar{\mathbf d}^1_m=-{\mathbf d^1}_{m+1}$ then for 
$$
\bar{\mathbf d}(z):=\sum_{m\in\mathbb Z}\bar{\mathbf d}_mz^{-m-2},
\quad \bar{\mathbf d}^1(z):=\sum_{m\in\mathbb Z}\bar{\mathbf d}^1_mz^{-m-2}
$$
the above defining relations become
\begin{align}
[\bar{\mathbf d}^1(z),\bar{\mathbf d}^1(w)]\label{eezw}
&=\partial_w\bar{\mathbf d}(w)\delta(z/w)+2\bar{\mathbf d}(w)\partial_w\delta(z/w) \\
&\quad -\left(P(w)\partial_w^3\delta(z/w)+\dfrac{3}{2}P'(w)\partial_w^2\delta(z/w)\right)\bar c,\notag
\end{align}
which is not far from being the relation for the Virasoro algebra.  In addition 
\begin{align}
[\bar{\mathbf d}(z),\bar{\mathbf d}(w)]\label{ddzw}
&=P(w)\partial_w \bar{\mathbf d}(w)\delta(z/w)+\partial_wP(w)\bar{ \mathbf d}(w)\delta(z/w) +2P(w) \bar{\mathbf d}(w)\partial_w\delta(z/w)\\ 
&\quad -\left(P(w)^2\partial_w^3\delta(z/w)+3P'(z)P(z)\partial_w^2\delta(z/w)+6P(z)\partial_w\delta(z/w)+12\partial_w\delta(z/w)\right)\bar c\notag
\end{align}
and  
\begin{align}
[ \bar{\mathbf d}(z),\bar{\mathbf d}^1(w)] &=P(w)\partial_w\bar{\mathbf d}^1(w)\delta(z/w)  +2P(w)\bar{\mathbf d}^1(w)\partial_w\delta(z/w)+\frac{3}{2}P'(w)\bar{\mathbf d}^1(w)\delta(z/w)\label{dezw}  \\
&\quad +\left(3w(2+w)(1+(4/w))^{1/2}\partial_w^2\delta(z/w)+w^3(1+(4/w))^{3/2} \partial_w^3\delta(z/w) 
\right)c_1 \notag
\end{align}
where we have used the following result: The Taylor series expansion of $\sqrt{1+z}$ in the formal power series ring $\mathbb C[\![z]\!]$ is 
\begin{equation}
1+\frac{z}{2}+\sum_{n\geq 2}(-1)^{n-1}\frac{(2n-3)!!}{2^nn!}z^n.
\end{equation}
In the next section below will take \eqnref{eezw}-\eqnref{dezw} as the version of the defining relations for the $3$-point Virasoro algebra.

\section{3-point Virasoro algebra action on the free field realization}

Before we go through the proof it will be fruitful to review Kac's $\lambda$-notation (see \cite{MR99f:17033} section 2.2 and \cite{MR1873994} for some of its properties) used in operator product expansions.  If $a(z)$ and $b(w)$ are formal distributions, then
$$
[a(z),b(w)]=\sum_{j=0}^\infty \frac{(a_{(j)}b)(w)}{(z-w)^{j+1}}
$$
is transformed under the {\it formal Fourier transform} 
$$
F^{\lambda}_{z,w}a(z,w)=\text{Res}_ze^{\lambda(z-w)}a(z,w),
$$
 into the sum
\begin{equation*}
[a_\lambda b]=\sum_{j=0}^\infty \frac{\lambda^j}{j!}a_{(j)}b.
\end{equation*}
So for example we have the following

\begin{lem} \label{pairs}
Given the definitions in Section \ref{threepoint}, with $\chi_1=0$  and  $\nu, \zeta$ fixed Laurent polynomials, we have
\begin{enumerate}
\item $
[\beta_\lambda\beta]=- 2  \lambda  \mathbf 1_0$,

\item $
[\beta^1_\lambda\beta]=- 2 \sqrt{P} \lambda \chi_1$,
\item $[\beta^1_\lambda\beta^1]
=-\left(2P  \lambda+\partial P  \right) \mathbf 1_0$ \\
\item \label{pbbpbb}
 \begin{align*}
[P:(\beta)^2:{_\lambda}P:\beta^2:]&=  -8 P^2\kappa_0:(\partial\beta)\beta:   -8 P^2\kappa_0:\beta^2:\lambda
-8P\partial P\kappa_0:\beta^2:  \\
&\quad +8P\left(\frac{1}{6}P\lambda^3+\frac{1}{2}\partial P\lambda^2+\frac{1}{2}\partial^2P\lambda \right) \kappa_0^2 .\notag
\end{align*}
\item \label{Pbbb1b1}
\begin{align*}
[P:\beta^2:_\lambda:(\beta^1)^2:]
&= 0=[:(\beta^1)^2:_\lambda  P:\beta^2:]
\end{align*}
\item \label{ppartialbpbb}
 \begin{equation*}
[P\partial\beta{_\lambda}:P:\beta^2:]= 4\kappa_0P^2\beta\lambda^2+8\kappa_0P\partial P\beta\lambda+4\kappa_0P\partial^2 P\beta\end{equation*}

\item \label{pbbppartialb}
\begin{align*}
[P:\beta^2:{_\lambda}P\partial\beta]&=-4\kappa_0P^2\beta\lambda^2-8\kappa_0P\partial P\beta\lambda-4\kappa_0P\partial^2 P\beta  \\
&\quad  -8\kappa_0P(P\lambda+ \partial P)\partial\beta    -4\kappa_0P^2\partial^2\beta \notag
\end{align*}
\item \label{pbbpartialpb}
\begin{align*}
[P:\beta^2:{_\lambda}\partial P\beta]&= -4\kappa_0\left(P\lambda+ \partial P\right)\partial  P\beta-4\kappa_0   \partial \beta P \partial P
\end{align*}

\item \label{partialpbpbb}
\begin{align*}
[\partial P\beta {_\lambda}P:\beta^2:]&= -4\kappa_0\left(\partial P\lambda+\partial^2 P\right)P \beta 
\end{align*}

\item \label{ppartialbppartialb}
\begin{align*}
[ P\partial \beta {_\lambda}P\partial \beta]&=2\kappa_0\left( P\lambda^3+3\partial P\lambda^2+3\partial^2 P\lambda\right) P.\end{align*}

\item \label{ppartialbpartialpb} 
 \begin{align*}
 [  P\partial \beta{_\lambda}\partial  P \beta]=2 \kappa_0\left(  P\lambda^2+2\partial P\lambda+\partial^2P\right)\partial P.
 \end{align*}

\item \label{partialpbppartialb}
 \begin{align*}
[  \partial  P \beta{_\lambda}P\partial \beta]= -2\kappa_0\left(  \partial P\lambda^2+2\partial^2P\lambda\right) P.
\end{align*}

\item \label{partialpbpartialpb}
 \begin{align*}
[  \partial  P \beta{_\lambda}\partial  P \beta]= -2\kappa_0\left( \partial P\lambda +\partial ^2P\right)\partial P.
\end{align*}

\item \label{beta12beta12}
\begin{align*}
[:(\beta^1)^2:_\lambda:(\beta^1)^2:]
&=-4\left(2P\lambda+\partial P\right) \kappa_0:(\beta^1)^2:   -8P\kappa_0:(\partial\beta^1)\beta^1:   \\ 
&\quad+8\left(\frac{1}{6}P^2\lambda^3+\frac{1}{2}P\partial P\lambda^2 +\frac{1}{4}(\partial P)^2\lambda\right) \kappa_0^2\notag
\end{align*}
\item \label{betabeta1betabeta1}
\begin{align*}
[\nu:\beta\beta^1:_\lambda:\nu\beta\beta^1:]
&=-2\kappa_0 \nu^2:(\beta^1)^2:\lambda-2\kappa_0 \nu\partial\nu:(\beta^1)^2:-2\kappa_0 \nu^2:\partial(\beta^1)\beta^1: \\
&\quad   
-\nu^2\left(2P \lambda+\partial P\right)\kappa_0:\beta^2: 
-2\kappa_0\nu\partial \nu P:\beta^2: 
-2 P \kappa_0 \nu^2:(\partial\beta)\beta:\\ 
&\quad +\frac{2}{3}\nu^2P\lambda^3\kappa_0^2 \\ 
&\quad +2\nu\left(\frac{1}{2}\nu\partial P+\partial \nu P\right)\lambda ^2\kappa_0^2 \\
&\quad +\nu(2\partial\nu\partial P +2\partial^2 \nu P )\kappa_0^2\lambda \\
&\quad +\frac{\nu}{3}\left(3\partial^2\nu\partial P+2 \partial^3\nu  P\right)\kappa_0^2 \\ \\
\end{align*}
\item \label{nubb1zpb1}

\begin{align*}
[\nu:\beta\beta^1:_\lambda \zeta \partial\beta^1]
&=-\nu\zeta\left(2P\lambda^2+3\partial P\lambda+\partial^2P\right)\kappa_0\beta \\
&\quad -\zeta
\left(4P\lambda+3\partial P\right)\kappa_0(\partial \nu\beta+\nu\partial\beta) \\
&\quad - 2P\zeta \kappa_0\left(2\partial\nu\partial\beta+\partial^2\nu\beta+\nu\partial^2\beta\right)
\end{align*}

\item \label{zpb1nbb1}
\begin{align*}
[\zeta \partial\beta^1_\lambda\nu:\beta\beta^1:]&=\zeta\nu\left(2P\lambda^2+\partial P\lambda\right) \kappa_0\beta \\
&\quad +\left(4\nu\partial \zeta P  \lambda+\nu\partial P\partial \zeta +2 P\nu\partial^2\zeta\right)\kappa_0\beta
\end{align*}

\item \label{zpb1zpb1}
\begin{align*}
[\zeta\partial\beta^1{_\lambda}\zeta\partial\beta^1]&=-2\zeta^2P\kappa_0 \lambda^3 -\zeta(3\zeta\partial P+6\partial \zeta P)\kappa_0\lambda ^2 \\
&\quad -\zeta\left(6\partial^2\zeta P+6\partial \zeta\partial P+\zeta\partial^2 P(w)\right) \kappa_0\lambda \\
&\quad -\zeta\left(\partial \zeta \partial^2P+3\partial^2\zeta \partial P+2\partial^3\zeta P\right)\kappa_0 
\end{align*}
\item \label{pbbbb1}
\begin{align*}
[P:(\beta)^2:_\lambda :\beta\beta^1:]&=  -4\kappa_0P:\beta\beta^1:\lambda  -4\kappa_0\left(\partial P:\beta\beta^1:+P:\partial \beta\beta^1:\right)\end{align*} 
\item  \label{b1b1bb1}
\begin{align*}
[:(\beta^1)^2:{_\lambda}\beta\beta^1:]&= -2\kappa_0 \left(2P:\beta^1\beta:\lambda+\partial P :\beta^1\beta:+2P:\partial \beta^1\beta: \right)
\end{align*}

\end{enumerate}
\end{lem}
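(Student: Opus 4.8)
The plan is to reduce every identity in the list to the three fundamental $\lambda$-brackets (1)--(3), which themselves come directly from the defining relations of $\hat{\mathfrak h}_3$ recorded in \secref{threepoint}. First I would establish (1)--(3) by applying the formal Fourier transform $F^\lambda_{z,w}$ to the distribution brackets $[\beta(z),\beta(w)]$, $[\beta^1(z),\beta(w)]$, and $[\beta^1(z),\beta^1(w)]$ written just after the definition of $\hat{\mathfrak h}_3$. Under this transform a term $\partial_w^j\delta(z/w)$ contributes a factor $\lambda^j$, the central elements $\mathbf 1_0,\mathbf 1_1$ act as the scalars $\kappa_0,\chi_1$ via \lemref{rhorep}, and the factor $\sqrt{1+(4/w)}\,w=\sqrt{P(w)}$ is interpreted through the Taylor expansion of $\sqrt{1+z}$ quoted at the end of \secref{3pt2cocycle}. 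This immediately yields $[\beta_\lambda\beta]=-2\lambda\mathbf 1_0$, $[\beta^1_\lambda\beta]=-2\sqrt{P}\lambda\chi_1$, and $[\beta^1_\lambda\beta^1]=-(2P\lambda+\partial P)\mathbf 1_0$, namely items (1)--(3).

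For the remaining items (4)--(22) I would invoke the non-commutative Wick formula (Wick's Theorem in the appendix) together with sesquilinearity of the $\lambda$-bracket, $[\partial a_\lambda b]=-\lambda[a_\lambda b]$ and $[a_\lambda\partial b]=(\partial+\lambda)[a_\lambda b]$. The Wick formula expands a bracket of normal-ordered products into single-contraction terms $:[a_\lambda b]c:+:b[a_\lambda c]:$ plus a double-contraction term $\int_0^\lambda[[a_\lambda b]_\mu c]\,d\mu$; iterating it peels off the normal ordering until only the base brackets (1)--(3) survive. Crucially, the hypothesis $\chi_1=0$ forces every mixed contraction $[\beta_\lambda\beta^1]$ to vanish, so only the $\beta$-$\beta$ and $\beta^1$-$\beta^1$ contractions contribute. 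This is exactly what produces the $\kappa_0$ coefficients on the single-contraction terms and the $\kappa_0^2$ coefficients on the $\lambda^2$ and $\lambda^3$ terms arising from the double-contraction integral.

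The one genuinely non-mechanical ingredient is the treatment of the Laurent-polynomial coefficients $P$, $\nu$, and $\zeta$, which carry a hidden $z$-dependence. I would handle these by Taylor's theorem in the vertex-algebra setting (also in the appendix): a coefficient $P(z)$ multiplying a field is rewritten as $P(w)+P'(w)(z-w)+\tfrac12 P''(w)(z-w)^2+\cdots$, and each power of $(z-w)$ converts into a $\partial_w$ on the delta function, i.e.\ an extra factor of $\lambda$ under the Fourier transform. This is precisely the mechanism generating the $\partial P$, $\partial^2 P$, $\partial^3 P$ terms and the higher powers of $\lambda$ seen throughout the list, and in the mixed items (15)--(18) it is responsible for the $\partial\nu$ and $\partial\zeta$ contributions.

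The hard part will be the bookkeeping rather than any conceptual difficulty: one must track the combinatorial factor of $2$ coming from the two ways to contract a square such as $:(\beta)^2:$, keep the normal-ordered quadratic terms in the correct order (note that $:(\partial\beta)\beta:$ and $:\beta\partial\beta:$ differ by a total derivative, so the Wick and integration-by-parts steps must be reconciled), and correctly assemble the integral term of the Wick formula into the cubic-in-$\lambda$ central pieces. Since each identity is an independent but structurally identical computation of this type, I would carry out one representative case in full --- say item \ref{pbbpbb} --- and indicate that the remaining items follow by the same three-step procedure (base bracket, Wick formula, Taylor expansion), exactly as for the parallel oscillator computation in \lemref{auxiliary lemma}.
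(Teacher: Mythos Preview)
Your proposal is correct and matches the paper's intended approach: the paper in fact gives no proof of this lemma at all (it simply ends with \qed, as with \lemref{auxiliary lemma}, which the paper calls ``routine but lengthy''), and the tools you invoke---the formal Fourier transform for (1)--(3), Wick's Theorem and Taylor's Theorem from the appendix, sesquilinearity of the $\lambda$-bracket, and the vanishing of mixed contractions via $\chi_1=0$---are precisely the machinery the paper assembles for this purpose. One small correction: the list has 20 items, not 22.
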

\qed

Note that similar expressions hold for $\alpha^1(z)$ and $\alpha^{1*}(z)$ (the $\lambda$-notation suppresses the variables $z$ and $w$, which are understood).

We can now establish our main result. We note that the Fock space $\mathcal F$ given below is the module constructed for the 3-point affine algebra $\mathfrak{sl}(2, \mathcal R)  \oplus\left( \Omega_{\mathcal R}/d{\mathcal R}\right)$ in \cite{MR3245847}.

\begin{thm}\label{mainresult}  Suppose $\lambda,\mu,\nu,\varkappa, \chi_1,\kappa_0 \in \mathbb C$ are constants with $\kappa_0\neq 0$.  The following defines a representation of the $3$-point Virasoro algebra $\mathfrak V$ on  $\mathcal F:=\mathbb C[\mathbf x]\otimes \mathbb C[\mathbf y]\otimes \mathcal V$, with $\mathcal V$ as in \lemref{heisenbergprop}
\begin{align*}
\pi(\bar {\mathbf d})(z)&=\pi(  d)(z)+\gamma :\beta(z)^2:+\mu\partial_z\beta(z)+\gamma_1:(\beta^1(z))^2:  +\gamma_2\beta(z)\\ 
\pi(\bar{\mathbf d}^1)(z)&=\pi(  d^1)(z)+\nu :\beta(z)\beta^1(z):+\zeta\partial_z\beta^1(z)  \\
\pi(\bar{\mathbf c})&=-\Big(\frac{1}{3}\delta_{r,0}+\frac{2}{3}\nu^2\kappa_0^2-2\zeta^2\kappa_0 \Big)=-\frac{1}{3}\left(\delta_{r,0}+8\kappa _0^4 \nu ^4\right) =-\frac{1}{3}\left(\delta_{r,0}+\frac{1}{2}\right)  \\
\pi(\mathbf c_2)&=0.
\end{align*}

Where the following conditions are satisfied:
\begin{align}
\nu^2 & =\kappa_0^{-2}/4 ,   \zeta=0, \\
\gamma&=-\nu^2P(z)\kappa_0=-\frac{P(z)}{4\kappa_0},\\ \mu &=0,\quad  \gamma_1=-\nu^2\kappa_0=-\frac{1}{4\kappa_0},\quad \gamma_2=0.
\end{align}
\end{thm}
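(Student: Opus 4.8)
The plan is to verify that the proposed generating fields satisfy, in $\lambda$-bracket form, the three relations obtained from \eqnref{eezw}--\eqnref{dezw} under the formal Fourier transform (which sends $\partial_w^j\delta(z/w)\mapsto\lambda^j$); for instance \eqnref{eezw} becomes $[\pi(\bar{\mathbf d}^1)_\lambda\pi(\bar{\mathbf d}^1)]=\partial\pi(\bar{\mathbf d})+2\pi(\bar{\mathbf d})\lambda-(P\lambda^3+\tfrac32P'\lambda^2)\pi(\bar c)$. Write $\pi(\bar{\mathbf d})=\pi(d)+C$ and $\pi(\bar{\mathbf d}^1)=\pi(d^1)+C'$ with $C=\gamma:\beta^2:+\mu\partial\beta+\gamma_1:(\beta^1)^2:+\gamma_2\beta$ and $C'=\nu:\beta\beta^1:+\zeta\partial\beta^1$. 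Since the $\alpha$-oscillators act on $\mathbb C[\mathbf x]$ while the $\beta$-oscillators act on $\mathbb C[\mathbf y]$, the two families commute, so every mixed bracket $[\pi(d)_\lambda C]$, $[\pi(d^1)_\lambda C']$, $[\pi(d)_\lambda C']$, $[\pi(d^1)_\lambda C]$ vanishes and each relation splits as a Witt part plus a Heisenberg part, e.g. $[\pi(\bar{\mathbf d})_\lambda\pi(\bar{\mathbf d})]=[\pi(d)_\lambda\pi(d)]+[C_\lambda C]$. The Witt parts are supplied verbatim by \propref{3ptWittrep}, and the Heisenberg parts I would assemble from \lemref{pairs}.

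Matching the non-central (field) terms fixes the parameters. On the right, the field part $P\partial\pi(\bar{\mathbf d})+P'\pi(\bar{\mathbf d})+2P\pi(\bar{\mathbf d})\lambda$ (and its analogues) splits into a $\pi(d)$-piece already delivered by the Witt bracket and a $C$-piece that the Heisenberg bracket must reproduce. From the diagonal relation \eqnref{eezw}, the field part of item \ref{betabeta1betabeta1} must equal $\partial C+2C\lambda$, which forces $\gamma=-\nu^2P\kappa_0$ and $\gamma_1=-\nu^2\kappa_0$ (with $\nu$ still free). The $P$-weighted relations \eqnref{ddzw} and \eqnref{dezw} then pin down $\nu$: the self-brackets \ref{pbbpbb}, \ref{beta12beta12} (together with the vanishing cross term \ref{Pbbb1b1}) and the cross-bracket \ref{pbbbb1}, \ref{b1b1bb1} carry an extra factor $\nu^2\kappa_0^2$ relative to the required coefficients — the cross-bracket, for instance, produces a leading $8\nu^3\kappa_0^2P:\beta\beta^1:\lambda$ against a required $2\nu P:\beta\beta^1:\lambda$ — so agreement demands $4\nu^2\kappa_0^2=1$, i.e. $\nu^2=\kappa_0^{-2}/4$. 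Finally the ansatz terms $\mu\partial\beta$, $\gamma_2\beta$, $\zeta\partial\beta^1$ generate, through items \ref{ppartialbppartialb}--\ref{partialpbpartialpb} and \ref{nubb1zpb1}--\ref{zpb1zpb1}, contributions with no counterpart on the right, forcing $\mu=\gamma_2=\zeta=0$.

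With the field terms reconciled, what remains are the scalar anomalies, proportional to $\delta_{r,0}$ (Witt sector) and $\kappa_0^2$ (Heisenberg sector), and these determine $\pi(\bar c)$. From \eqnref{eezw} both the $\lambda^3$- and $\lambda^2$-coefficients give $\pi(\bar c)=-(\tfrac13\delta_{r,0}+\tfrac23\nu^2\kappa_0^2)$, which with $\nu^2\kappa_0^2=\tfrac14$ is the asserted $-\tfrac13(\delta_{r,0}+\tfrac12)$. I would then check that the heavier anomaly of \eqnref{ddzw}, whose coefficients are $P^2\lambda^3$, $3P'P\lambda^2$ and the genuinely new $(6P+12)\lambda$, is reproduced by this same $\pi(\bar c)$; the point is that, after using $\partial^2P=2$ and $(\partial P)^2=4P+16$, the Witt and Heisenberg scalar terms fold into exactly $-\pi(\bar c)(P^2\lambda^3+3P'P\lambda^2+6P\lambda+12\lambda)$, so no constraint beyond $\nu^2=\kappa_0^{-2}/4$ appears.

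The hard part is the cross-relation \eqnref{dezw}. Its anomaly is attached to $c_1$ (the cross-cocycle $\phi_1(d,d^1)$, not $\bar c$) and carries the irrational coefficients $\tfrac32P'\sqrt P\,\lambda^2+P^{3/2}\lambda^3$. The only mechanism in the oscillator algebra that could produce a $\sqrt P$ is a $\beta$--$\beta^1$ contraction, and by \lemref{pairs} this is $[\beta^1_\lambda\beta]=-2\sqrt P\,\lambda\chi_1$, which vanishes because $\chi_1=0$. Hence the free-field side of \eqnref{dezw} has no anomaly at all, the relation closes on its field terms alone, and consistency forces the cross-cocycle central element to act as zero — exactly parallel to $\tau(\omega_1)=0$ in \thmref{mainresult0}. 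The delicate point to confirm is that this is compatible with the nonzero $\pi(\bar c)$ demanded by the diagonal relations: because $\bar c=c_1-2c_2$, killing the cross-cocycle central charge does not annihilate $\bar c$, so a single assignment of the two central values can satisfy all three relations at once. I expect the bookkeeping of these scalar anomalies, and verifying that one choice of the six parameters together with the central charges works simultaneously across \eqnref{eezw}, \eqnref{ddzw} and \eqnref{dezw}, to be the most error-prone step.
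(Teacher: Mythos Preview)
Your approach is essentially the paper's own: verify \eqnref{eezw}--\eqnref{dezw} in $\lambda$-bracket form, splitting each side into a Witt piece (handled by \propref{3ptWittrep}) and a Heisenberg piece (assembled from \lemref{pairs}), then match field terms to fix $\gamma,\gamma_1,\gamma_2,\mu,\zeta,\nu$ and read off $\pi(\bar c)$ from the scalar anomalies. The only cosmetic difference is the order in which the constraints emerge: the paper first lets \eqnref{eezw} give $\mu=-2\nu\zeta P\kappa_0$ and $\gamma_2=-\nu\zeta\,\partial P\,\kappa_0$ (so $\mu,\gamma_2$ are tied to $\zeta$, not immediately zero), and only the \eqnref{ddzw} computation then forces $\zeta=0$; your shortcut ``no counterpart on the right'' reaches the same endpoint but skips this intermediate relation.

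Your discussion of the cross-relation goes further than the paper's proof, which simply records that $[\pi(\bar{\mathbf d})_\lambda\pi(\bar{\mathbf d}^1)]$ closes on field terms with no anomaly and stops there. You correctly trace the absence of a $\sqrt{P}$ anomaly to $\chi_1=0$. One caution: your compatibility argument says ``killing the cross-cocycle central charge does not annihilate $\bar c$,'' which is true, but note what it actually implies. The cross anomaly in \eqnref{dezw} is proportional to $c_1$ alone, so its vanishing forces $\pi(c_1)=0$; together with $\pi(\bar c)=\pi(c_1)-2\pi(c_2)\neq 0$ this gives $\pi(c_2)=-\tfrac12\pi(\bar c)\neq 0$, which is in tension with the stated $\pi(\mathbf c_2)=0$. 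This is not a flaw in your strategy---the paper's own proof is silent on exactly this point---but it is worth flagging rather than absorbing into the hand-wave at the end.
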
 

\begin{proof}  We prove that \eqnref{eezw}--\eqnref{dezw} are satisfied by $\pi\bar{(\mathbf d)}(z)$ and $\pi\bar{(\mathbf d^1)}(z)$, the computations are presented in a compact form. 
We begin with \eqnref{eezw}:
 By \lemref{3ptWittrep},\eqnref{betabeta1betabeta1}-\eqnref{zpb1zpb1}
\begin{align*}
[\pi(\bar{\mathbf d}^1)_\lambda \pi(\bar{\mathbf d}^1)]
&=[\pi( d^1)_\lambda \pi(d^1)]-2\kappa_0 \nu^2:(\beta^1)^2:\lambda-2\kappa_0 \nu\partial\nu:(\beta^1)^2:-2\kappa_0 \nu^2:\partial(\beta^1)\beta^1: \\
&\quad   
-\nu^2\left(2P \lambda+\partial P\right)\kappa_0:\beta^2: 
-2\kappa_0\nu\partial \nu P:\beta^2: 
-2 P \kappa_0 \nu^2:(\partial\beta)\beta:\\ 
&\quad +\frac{2}{3}\nu^2P\lambda^3\kappa_0^2 
  +2\nu\left(\frac{1}{2}\nu\partial P+\partial \nu P\right)\lambda ^2\kappa_0^2 
 +\nu(2\partial\nu\partial P +2\partial^2 \nu P )\kappa_0^2\lambda \\
&\quad +\frac{\nu}{3}\left(3\partial^2\nu\partial P+2 \partial^3\nu  P\right)\kappa_0^2   
 -\nu\zeta\left(2P\lambda^2+3\partial P\lambda+\partial^2P\right)\kappa_0\beta \\
&\quad -\zeta
\left(4P\lambda+3\partial P\right)\kappa_0(\partial \nu\beta+\nu\partial\beta)  
- 2P\zeta \kappa_0\left(2\partial\nu\partial\beta+\partial^2\nu\beta+\nu\partial^2\beta\right) \\
& \quad +\zeta\nu\left(2P\lambda^2+\partial P\lambda\right) \kappa_0\beta 
  +\left(4\nu\partial \zeta P  \lambda+\nu\partial P\partial \zeta +2 P\nu\partial^2\zeta\right)\kappa_0\beta 
   -2\zeta^2P\kappa_0 \lambda^3\\
   &\quad -\zeta(3\zeta\partial P+6\partial \zeta P)\kappa_0\lambda ^2 
  -\zeta\left(6\partial^2\zeta P+6\partial \zeta\partial P+\zeta\partial^2 P\right) \kappa_0\lambda \\
&\quad -\zeta\left(\partial \zeta \partial^2P+3\partial^2\zeta \partial P+2\partial^3\zeta P\right)\kappa_0 \\ \\
&= \pi(  \partial d)+2\pi(  d)\lambda-2\kappa_0 \nu^2:(\beta^1)^2:\lambda-2\kappa_0 \nu\partial\nu:(\beta^1)^2:-2\kappa_0 \nu^2:\partial(\beta^1)\beta^1: \\
&\quad   
-\nu^2\left(2P \lambda+\partial P\right)\kappa_0:\beta^2: 
-2\kappa_0\nu\partial \nu P:\beta^2: 
-2 P \kappa_0 \nu^2:(\partial\beta)\beta:\\  
&\quad+\left(\left(4\nu\partial \zeta P  \lambda+\nu\partial P\partial \zeta +2 P\nu\partial^2\zeta\right) -\nu\zeta\left(2\partial P\lambda+\partial^2P\right) \right)\kappa_0\beta\\
&\quad -\zeta
\left(4P\lambda+3\partial P\right)\kappa_0\partial \nu\beta-
\zeta\left(4P\lambda+3\partial P\right)\kappa_0\nu\partial\beta \\
&\quad - 4\zeta P\kappa_0\partial\nu\partial\beta-2\zeta P\kappa_0\partial^2\nu\beta-2\zeta P\kappa_0\nu\partial^2\beta
\\ 
&\quad +\left(\frac{1}{3}\delta_{r,0}+\frac{2}{3}\nu^2\kappa_0^2-2\zeta^2\kappa_0\right)P \lambda^3  \\ 
&\quad +\left(\frac{1}{2}\partial P\delta_{r,0} +\left(\nu^2\partial P+2\nu\partial \nu P\right)\kappa_0  -\zeta(3\zeta\partial P+6\partial \zeta P)\right)\kappa_0\lambda ^2 \\
&\quad +\left(\nu(2\partial\nu\partial P +2\partial^2 \nu P )\kappa_0-\zeta\left(6\partial^2\zeta P+6\partial \zeta\partial P+\zeta\partial^2 P\right)\right)\kappa_0\lambda \\
&\quad +\left(\frac{\nu}{3}\left(3\partial^2\nu\partial P+2 \partial^3\nu  P\right)\kappa_0 -\zeta\left(\partial \zeta \partial^2P+3\partial^2\zeta \partial P+2\partial^3\zeta P\right)\right)\kappa_0\\ \\
\end{align*}
Assigning the values \begin{align*} 
\pi(\bar c)&=-\Big(\frac{1}{3}\delta_{r,0}+\frac{2}{3}\nu^2\kappa_0^2-2\zeta^2\kappa_0 \Big) \\
\gamma_1&=-\nu^2\kappa_0 \\
\gamma&=-\nu^2P\kappa_0 \\
\mu&=-2\nu\zeta P\kappa_0\\
\gamma_2 &= - \nu\zeta\partial P\kappa_0, \\
\end{align*}
we obtain
\begin{align*}
[\pi(\bar{\mathbf d}^1)_\lambda \pi(\bar{\mathbf d}^1)]
&= \pi(  \partial d) +2\pi(  d)\lambda-2\kappa_0 \nu^2:(\beta^1)^2:\lambda-2\kappa_0 \nu^2:\partial(\beta^1)\beta^1: \\
&\quad   
-\nu^2\left(2P \lambda+\partial P\right)\kappa_0:\beta^2: 
-2 P \kappa_0 \nu^2:(\partial\beta)\beta:\\  
&\quad-\nu\zeta\left(2\partial P\lambda+\partial^2P \right)\kappa_0\beta\\
&\quad -\zeta
\left(4P\lambda+3\partial P\right)\kappa_0\nu\partial\beta -2\zeta P\kappa_0\nu\partial^2\beta \\
&\quad +\left(\frac{1}{3}\delta_{r,0}+\frac{2}{3}\nu^2\kappa_0^2-2\zeta^2\kappa_0\right)P \lambda^3  \\ 
&\quad +\left(\frac{1}{2}\delta_{r,0} +\nu^2\kappa_0^2  -3\zeta^2\kappa_0 \right)\partial P\lambda ^2 \\ 
&=\pi(\partial\bar{\mathbf d}) +2\pi(\bar{\mathbf d})\lambda  -\left(P \lambda^3+\dfrac{3}{2}\partial P\lambda^2\right)\pi(\bar c),
\end{align*}

For \eqnref{ddzw} we have by \lemref{3ptWittrep}, and items \eqnref{pbbpbb}-\eqnref{partialpbpartialpb} in \lemref{pairs}
\begin{align*}
[\pi(\bar {\mathbf d})_\lambda\pi(\bar {\mathbf d})]
&=P\partial  \pi(d)+\partial P\pi(d)+2P\pi(d)\lambda \\
&\quad +\left(\frac{1}{3}\delta_{r,0}+\frac{8}{3}\nu^4\kappa_0^4\right)P^2\lambda^3+\left(\delta_{r,0}+8\nu^4\kappa_0^4\right)P\partial P\lambda^2+(\delta_{r,0}+8\nu^4\kappa_0^4)\left(2P   +4\right)\lambda \\
&\quad-8\nu^4\kappa_0^3 P^2:(\partial\beta)\beta:   -8\nu^4\kappa_0^3P^2:\beta^2:\lambda -8\nu^4\kappa_0^3P\partial P:\beta^2:  \\
&\quad  -8\nu^3\zeta\kappa_0^3P^2\beta\lambda^2-16\nu^3\zeta\kappa_0^3P\partial P\beta\lambda-8\nu^3\zeta\kappa_0^3P\partial^2 P\beta  \\
&\quad  -16\nu^3\zeta\kappa_0^3P(P\lambda+ \partial P)\partial\beta    -8\nu^3\zeta\kappa_0^3P^2\partial^2\beta  \\
&\quad   -4\nu^3\zeta\kappa_0^3\left(P\lambda+ \partial P\right) \partial P\beta-4\nu^3\zeta\kappa_0^3  \partial \beta P \partial P \\
&\quad +8\nu^3\zeta\kappa_0^3P^2\beta\lambda^2+16\nu^3\zeta\kappa_0^3P\partial P\beta\lambda+8\nu^3\zeta\kappa_0^3P\partial^2 P\beta    \\
&\quad +8\nu^2\zeta^2\kappa_0^3\left( P\lambda^3+3\partial P\lambda^2+3\partial^2 P\lambda\right) P
  +4\nu^2\zeta^2\kappa_0^3\left(   P\lambda^2 +2\partial P\lambda +\partial^2P\right)\partial P\\
&\quad-4\nu^4\kappa_0^3\left(2P\lambda+\partial P\right) :(\beta^1)^2:   -8P\nu^4\kappa_0^3:(\partial\beta^1)\beta^1:   \\ 
&\quad -4\nu^3\zeta\kappa_0^3\left(\partial P\lambda+\partial^2 P\right)P \beta 
 -4\nu^2\zeta^2\kappa_0^3\left(  \partial P\lambda^2+2\partial^2P\lambda \right) P 
  -2\nu^2\zeta^2\kappa_0^3\left( \partial P\lambda +\partial ^2P\right)\partial P \\ \\ 
&=P\partial  \pi(d)+\partial P\pi(d)+2P\pi(d)\lambda \\
&\quad-8\nu^4\kappa_0^3P^2:(\partial\beta)\beta:   -8 \nu^4\kappa_0^3P^2:\beta^2:\lambda
-8\nu^4\kappa_0^3P\partial P:\beta^2:  \\
&\quad-16\nu^3\zeta\kappa_0^3P(P\lambda+ \partial P)\partial\beta    -8\nu^3\zeta\kappa_0^3P^2\partial^2\beta   \\
&\quad  -4\nu^3\zeta\kappa_0^3\left(P\lambda+ \partial P\right) \partial P\beta-4\nu^3\zeta\kappa_0^3  \partial \beta P \partial P\\
&\quad -4\nu^4\kappa_0^3\left(2P\lambda+\partial P\right) :(\beta^1)^2:   -8P\nu^4\kappa_0^3:(\partial\beta^1)\beta^1:   \\ 
&\quad-4\nu^3\zeta\kappa_0^3\left(\partial P\lambda+\partial^2 P\right)P \beta \\
&\quad  +\nu^2\zeta^2\kappa_0^3\left(   8P^2\lambda^3+24P\partial P\lambda^2  +(32P  +24(P+4))\lambda  +4\partial P   \right) \\
  &\quad +\left(\frac{1}{3}\delta_{r,0}+\frac{8}{3}\nu^4\kappa_0^4\right)P^2\lambda^3+\left(\delta_{r,0}+8\nu^4\kappa_0^4\right)P\partial P\lambda^2+(\delta_{r,0}+8\nu^4\kappa_0^4)\left(2P   +4\right)\lambda
\end{align*}

Set $\nu^2\kappa_0^2=\frac{1}{4}$, $\zeta=0$. Then $\mu=0$, $\gamma_2=0$ and $\pi(\bar c)=-\Big(\frac{1}{3}\delta_{r,0}+\frac{1}{6}\Big)$.
We obtain
  
  \begin{align*}
[\pi(\bar {\mathbf d})_\lambda\pi(\bar {\mathbf d})]
&=P\partial\pi(  d)+\partial P\pi(  d)+2P \pi(  d)\lambda  \\
&\quad -\left(P^2\lambda^3+3P\partial P\lambda^2+6P\lambda+12\lambda\right)\pi(\bar c)  \\ \\
&\quad +2 P\gamma :\partial\beta\beta:+2P\gamma :\beta^2:\lambda +2\gamma \partial P:\beta^2: \\
&\quad +2P\gamma_1:\beta^1\partial\beta^1 :  +2P\gamma_1:(\beta^1)^2:\lambda +\gamma_1\partial P:(\beta^1)^2:\\ 
&=P\partial\pi(  d)+2 P^2\gamma_1 :\partial\beta\beta: +P\partial P\gamma_1 :\beta^2:+2P\gamma_1:\beta^1\partial\beta^1 :\\
&\quad+\partial P\pi(  d) +P\partial P\gamma_1 :\beta^2: +\gamma_1\partial P:(\beta^1)^2:\\
&\quad+2P \pi(  d)\lambda +2P^2\gamma_1 :\beta^2:\lambda  +2P\gamma_1:(\beta^1)^2:\lambda\\
&\quad -\left(P^2\lambda^3+3P\partial P\lambda^2+6P\lambda+12\lambda\right)\pi(\bar c)  \\ \\
&=P\partial \pi(\bar{\mathbf d})+\partial P\pi(\bar{ \mathbf d}) +2P \pi(\bar{\mathbf d})\lambda \\ 
&\quad -\left(P^2\lambda^3+3P\partial P\lambda^2+6P\lambda+12\lambda\right)\pi(\bar c).
\end{align*} 

For \eqnref{dezw} we have 
\begin{align*}
[\pi(\bar {\mathbf d})_\lambda\pi(\bar {\mathbf d^1})]
&=[\pi(  d)_\lambda \pi(d^1)]+\nu[\gamma:\beta^2:_\lambda :\beta\beta^1:] +\gamma_1\nu[:(\beta^1)^2{_\lambda} :\beta\beta^1:]   \\ \\
&=P\partial \pi(d^1)  +\frac{3}{2}\partial P\pi(d^1) +2P \pi(d^1)\lambda \\
&\quad+ 4\kappa_0^2\nu^3 P:\beta\beta^1:\lambda +4\kappa_0^2\nu^3\left(\partial P:\beta\beta^1:+P:\partial \beta\beta^1:\right)  \\
&\quad +2\kappa_0^2\nu^3\left(2P:\beta^1\beta:\lambda+\partial P :\beta^1\beta:+2P:\partial \beta^1\beta: \right) \\ \\
&=P\partial \pi(d^1)  +\frac{3}{2}\partial P\pi(d^1) +2P \pi(d^1)\lambda \\
&\quad+  \nu P:\beta\beta^1:\lambda +\nu\left(\partial P:\beta\beta^1:+P:\partial \beta\beta^1:\right)  \\
&\quad +\frac{\nu}{2}\left(2P:\beta^1\beta:\lambda+\partial P :\beta^1\beta:+2P:\partial \beta^1\beta: \right) \\ \\
&=P\partial\pi(  e)+\nu P :\partial\beta\beta^1:+\nu P :\beta\partial\beta^1:  \\ 
&\quad +2P \pi(  e)\lambda+2\nu P : \beta\beta^1:\lambda  
  +\frac{3}{2}\partial P\pi(  e)+ \frac{3}{2}\nu\partial P :\beta\beta^1: \\
&=P\partial\pi(\bar{\mathbf d}^1)  +2P\pi(\bar{\mathbf d}^1)\lambda+\frac{3}{2}\partial P\pi(\bar{\mathbf d}^1)
\end{align*}

\end{proof}

We note that the values of the expressions $\pi(\bar c), 
\gamma_1,
\gamma,
\mu,$ and 
$\gamma_2$ chosen here are sufficient to have a representation on our chosen Fock space, and it is possible that other values could appear for other representations. 
We  conjecture that the semi-direct product algebra $\mathfrak V \ltimes \hat{\mathfrak g}$ will act on the same Fock space given appropriate conditions. 

\section{Appendix}
For the convenience of the reader, we include the following results which are useful for performing the computations necessary for proving our results.

\begin{thm}[Wick's Theorem, 
\cite{MR99f:17033} ]  Let  $a^i(z)$ and $b^j(z)$ be formal
distributions with coefficients in the associative algebra 
 $\End(\mathbb C[\mathbf x]\otimes \mathbb C[\mathbf y])$, 
 satisfying
\begin{enumerate}
\item $[ \lfloor a^i(z)b^j(w)\rfloor ,c^k(x)_\pm]=[ \lfloor
a^ib^j\rfloor ,c^k(x)_\pm]=0$, for all $i,j,k$ and
$c^k(x)=a^k(z)$ or
$c^k(x)=b^k(w)$.
\item $[a^i(z)_\pm,b^j(w)_\pm]=0$ for all $i$ and $j$.
\item The products 
$$
\lfloor a^{i_1}b^{j_1}\rfloor\cdots
\lfloor a^{i_s}b^{i_s}\rfloor:a^1(z)\cdots a^M(z)b^1(w)\cdots
b^N(w):_{(i_1,\dots,i_s;j_1,\dots,j_s)}
$$
have coefficients in
$\End(\mathbb C[\mathbf x]\otimes \mathbb C[\mathbf y])$ for all subsets
$\{i_1,\dots,i_s\}\subset \{1,\dots, M\}$, $\{j_1,\dots,j_s\}\subset
\{1,\cdots N\}$. Here the subscript
${(i_1,\dots,i_s;j_1,\dots,j_s)}$ means that those factors $a^i(z)$,
$b^j(w)$ with indices
$i\in\{i_1,\dots,i_s\}$, $j\in\{j_1,\dots,j_s\}$ are to be omitted from
the product
$:a^1\cdots a^Mb^1\cdots b^N:$ and when $s=0$ we do not omit
any factors.
\end{enumerate}
Then
\begin{align*}
:&a^1(z)\cdots a^M(z)::b^1(w)\cdots
b^N(w):= \\
  &\sum_{s=0}^{\min(M,N)}\sum_{i_1<\cdots<i_s,\,
j_1\neq \cdots \neq j_s}\lfloor a^{i_1}b^{j_1}\rfloor\cdots
\lfloor a^{i_s}b^{j_s}\rfloor
:a^1(z)\cdots a^M(z)b^1(w)\cdots
b^N(w):_{(i_1,\dots,i_s;j_1,\dots,j_s)}.
\end{align*}
\end{thm}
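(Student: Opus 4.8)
The plan is to prove Wick's Theorem by induction on the total number of factors, or more precisely, by reducing to the fundamental case of a single contraction between one ``$a$'' factor and one ``$b$'' factor, and then iterating. First I would establish the base case $M=N=1$, which is essentially the definition of contraction together with the normal ordering conventions: one must show that $:a^1(z)::b^1(w): = \lfloor a^1 b^1\rfloor + :a^1(z)b^1(w):$, where the single-contraction term accounts for the $s=1$ summand and the normal-ordered product accounts for the $s=0$ summand. This follows directly from \eqnref{contraction}, rearranging $a^1(z)b^1(w)=:a^1(z)b^1(w):+\lfloor a^1b^1\rfloor$.

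The core of the argument is to move one factor at a time past a normal-ordered product, using a ``generalized Wick move'' for commuting a single field through a normal ordering. The key identity I would establish first is, for a single field $c(x)$ and a normal-ordered monomial $:a^1(z)\cdots a^M(z):$,
\begin{equation*}
c(x)_-\,:a^1(z)\cdots a^M(z): \;=\; :a^1(z)\cdots a^M(z):\,c(x)_- + \sum_{i=1}^M :a^1(z)\cdots \widehat{a^i(z)}\cdots a^M(z):\,\lfloor c\,a^i\rfloor,
\end{equation*}
which is proved by induction on $M$ using hypothesis (2), that the $\pm$ parts of all fields commute among themselves, so that only the single-contraction terms survive. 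Here the hat denotes omission and $\lfloor c\,a^i\rfloor=[c(x)_-,a^i(z)]$ is the contraction. Hypotheses (1) and (3) guarantee that the contraction coefficients are central enough to be pulled out of the normal ordering and that every intermediate expression has coefficients in $\End(\mathbb C[\mathbf x]\otimes \mathbb C[\mathbf y])$, so the manipulations are legitimate.

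With this move in hand, I would prove the full statement by induction on $N$, the number of ``$b$'' factors. Writing $:b^1(w)\cdots b^N(w): = b^1(w)_+:b^2\cdots b^N: + :b^2\cdots b^N:\,b^1(w)_-$ (peeling off the leftmost factor according to \eqnref{normalorder}), one commutes $b^1(w)_-$ leftward through $:a^1(z)\cdots a^M(z):$ using the identity above; this produces the terms where $b^1$ is either left uncontracted (feeding the inductive hypothesis applied to $b^2,\dots,b^N$) or contracted with exactly one $a^i$ (contributing the summands in which $j_1$, or one of the $j$'s, equals $1$). Carefully bookkeeping the index sets $\{i_1<\cdots<i_s\}$ and the distinct $j$'s, and using hypothesis (1) to commute the freshly produced contraction $\lfloor a^i b^1\rfloor$ past all remaining $\pm$-parts, reassembles precisely the double sum in the conclusion.

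The main obstacle will be the combinatorial bookkeeping: one must verify that peeling off factors and commuting them through produces each multi-contraction term $\lfloor a^{i_1}b^{j_1}\rfloor\cdots\lfloor a^{i_s}b^{j_s}\rfloor$ exactly once, with the correct surviving normal-ordered product and no spurious sign or ordering discrepancies, and in particular that the constraint $i_1<\cdots<i_s$ together with $j_1\neq\cdots\neq j_s$ emerges naturally from the order in which contractions are generated. Hypotheses (1)--(3) are exactly what is needed to ensure that contractions are scalars (or central) that can be freely reordered and extracted, that the $\pm$-parts commute so no extra contractions among the $a$'s or among the $b$'s appear, and that every partial product remains a well-defined operator; the delicate point is confirming that these hypotheses suffice at each inductive step so that the only surviving cross terms are single contractions between an $a$ and a $b$.
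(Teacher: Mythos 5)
A preliminary remark: the paper itself gives no proof of this statement --- it is quoted in the appendix from Kac \cite{MR99f:17033} purely for the reader's convenience --- so your attempt can only be measured against the standard textbook induction, which your outline does resemble. However, there is a genuine directional error in your inductive step which, carried out as written, produces the wrong formula. By the definition \eqnref{contraction}, the contraction in the theorem is $\lfloor a^i b^j\rfloor=[a^i(z)_-,b^j(w)]$, with the $a$-factor first, and $\lfloor ab\rfloor\neq\lfloor ba\rfloor$ in general: their difference is the full bracket $[a(z),b(w)]$, a delta-function term. (In this paper's own $r=1$ normal ordering one has $\lfloor \alpha\,\alpha^*\rfloor=0$ while $\lfloor \alpha^*\alpha\rfloor=-\delta(w/z)$, so the orientation is not cosmetic.) Your key identity moves $c(x)_-$ rightward through $:a^1(z)\cdots a^M(z):$ and generates $\lfloor c\,a^i\rfloor=[c(x)_-,a^i(z)]$; taking $c=b^1$ these are the wrongly oriented contractions $\lfloor b^1a^i\rfloor$. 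Moreover, after peeling via \eqnref{normalorder},
\begin{equation*}
:a^1\cdots a^M:\,:b^1\cdots b^N:\;=\;:a^1\cdots a^M:\,b^1(w)_+\,:b^2\cdots b^N:\;+\;:a^1\cdots a^M:\,:b^2\cdots b^N:\,b^1(w)_-,
\end{equation*}
and unrolling $:a^1\cdots a^M:$ (plus parts to the left, minus parts accumulated on the right) shows that the only factors $b^1(w)_-$ must pass on the way to its normal-ordered slot are the minus parts $a^i(z)_-$, with which it commutes by hypothesis (2). So commuting $b^1(w)_-$ contributes \emph{no} contractions at all; the contractions $\lfloor a^ib^1\rfloor=[a^i(z)_-,b^1(w)_+]$ arise instead from moving $b^1(w)_+$ leftward past the $a^i(z)_-$. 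You have the roles of $b^1(w)_+$ and $b^1(w)_-$ exactly interchanged.

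The repair is routine and leaves your combinatorial scheme intact. Either prove the mirror of your lemma,
\begin{equation*}
:a^1(z)\cdots a^M(z):\,c(x)_+\;=\;c(x)_+\,:a^1(z)\cdots a^M(z):\;+\;\sum_{i=1}^M \lfloor a^i c\rfloor\,:a^1(z)\cdots \widehat{a^i(z)}\cdots a^M(z):,
\end{equation*}
valid because $[a^i(z)_+,c(x)_+]=0$ by hypothesis (2) and the contractions $\lfloor a^ic\rfloor=[a^i(z)_-,c(x)_+]$ can be extracted from the normal ordering by hypothesis (1), and apply it to $c=b^1$; or, more simply, run the induction on $M$ rather than $N$, peeling $a^1$ from the left, where the needed lemma $[a^1(z)_-,\,:b^1(w)\cdots b^N(w):]=\sum_{j=1}^N \lfloor a^1b^j\rfloor\,:b^1(w)\cdots\widehat{b^j(w)}\cdots b^N(w):$ produces the correctly oriented contractions automatically since $\lfloor a^1b^j\rfloor=[a^1(z)_-,b^j(w)]$ by definition. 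With that correction, your base case, single-factor move, and the bookkeeping forcing $i_1<\cdots<i_s$ and $j_1\neq\cdots\neq j_s$ constitute the standard proof and go through.
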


\begin{thm}[Taylor's Theorem, \cite{MR99f:17033}, 2.4.3]
\label{Taylorsthm}  Let
$a(z)$ be a formal distribution.  Then in the region $|z-w|<|w|$,
\begin{equation}
a(z)=\sum_{j=0}^\infty \partial_w^{(j)}a(w)(z-w)^j.
\end{equation}
\end{thm}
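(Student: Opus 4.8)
The plan is to reduce the stated identity to the generalized binomial theorem by expanding everything in the single combination $z-w$. Write $a(z)=\sum_{n\in\mathbb Z}a_{(n)}z^{-n-1}$, and recall that $\partial_w^{(j)}:=\frac{1}{j!}\partial_w^{j}$ denotes the divided-power derivative appearing in the statement. Since the left-hand side is visibly linear in the coefficients $a_{(n)}$, and each $\partial_w^{(j)}$ acts on $a(w)$ term by term, both sides of the asserted equality are additive in $a(z)$ and reduce, after factoring out each $a_{(n)}$, to the scalar identity for a single monomial $a(z)=z^{m}$ with $m\in\mathbb Z$. So it suffices to establish the claim for $z^m$.

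For $a(z)=z^m$ I would first compute the divided-power derivatives outright,
\begin{equation*}
\partial_w^{(j)}w^{m}=\frac{1}{j!}\,m(m-1)\cdots(m-j+1)\,w^{m-j}=\binom{m}{j}w^{m-j},
\end{equation*}
where $\binom{m}{j}$ is the generalized binomial coefficient, valid for every $m\in\mathbb Z$ and $j\ge 0$. Substituting, the right-hand side of the claimed identity becomes $\sum_{j\ge 0}\binom{m}{j}w^{m-j}(z-w)^{j}$. The crux is to recognize this as the expansion of $z^{m}=(w+(z-w))^{m}$ developed as a formal power series in $(z-w)/w$, that is, the image of $z^m$ under the expansion operator attached to the region $|z-w|<|w|$ (the exact analogue of the $\iota_{z,w}$ used in \secref{oscilllatoralg}, now developing negative powers of $z=w+(z-w)$ in the small quantity $z-w$). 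For $m\ge 0$ this is a finite binomial sum; for $m<0$ it is the standard generalized binomial series, which is precisely what the prescription ``in the region $|z-w|<|w|$'' selects. This settles the monomial case.

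To recover the general statement I would put $m=-n-1$ in the monomial identity, multiply by $a_{(n)}$, sum over $n$, and reorganize by powers of $z-w$, obtaining
\begin{equation*}
a(z)=\sum_{j\ge 0}(z-w)^{j}\sum_{n\in\mathbb Z}a_{(n)}\binom{-n-1}{j}w^{-n-1-j}=\sum_{j\ge 0}\bigl(\partial_w^{(j)}a\bigr)(w)\,(z-w)^{j},
\end{equation*}
which is exactly the assertion of the theorem. The main---indeed essentially the only---delicate point is legitimizing the interchange of the sums over $n$ and $j$, and I expect this to be where the care is needed. It is handled by observing that the natural grading is by powers of $z-w$: for each \emph{fixed} $j$ the inner sum $\sum_{n}a_{(n)}\binom{-n-1}{j}w^{-n-1-j}$ is a single, well-defined formal distribution in $w$, namely $(\partial_w^{(j)}a)(w)$, so no infinite collision of like monomials occurs and the rearrangement is valid at the level of formal power series in $(z-w)$ with formal-distribution coefficients. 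With the generalized binomial expansion and this grading argument in place, the proof is complete.
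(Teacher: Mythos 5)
Your proof is correct: the paper states this theorem without proof, citing it from Kac's \emph{Vertex algebras for beginners} (2.4.3), and your argument---reduction by linearity to a single monomial $z^m$, the generalized binomial expansion of $(w+(z-w))^m$ selected by the region $|z-w|<|w|$, and the observation that for each fixed $j$ the coefficient of $(z-w)^j$ assembles into the single well-defined formal distribution $\partial_w^{(j)}a(w)$ so that the rearrangement is legitimate---is exactly the standard proof given in that reference. Nothing further is needed.
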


\begin{thm}[\cite{MR99f:17033}, Theorem 2.3.2]\label{kac}  Set $\mathbb C[\mathbf x]=\mathbb C[x_n,x^1_n|n\in\mathbb Z]$ and $\mathbb C[\mathbf y]= C[y_m,y_m^1|m\in\mathbb N^*]$.  Let $a(z)$ and $b(z)$ 
be formal distributions with coefficients in the associative algebra 
 $\End(\mathbb C[\mathbf x]\otimes \mathbb C[\mathbf y])$ where we are using the usual normal ordering.   The
following are equivalent
\begin{enumerate}[(i)]
\item
$\displaystyle{[a(z),b(w)]=\sum_{j=0}^{N-1}\partial_w^{(j)}
\delta(z-w)c^j(w)}$, where $c^j(w)\in \End(\mathbb C[\mathbf x]\otimes \mathbb 
C[\mathbf y])[\![w,w^{-1}]\!]$.
\item
$\displaystyle{\lfloor
ab\rfloor=\sum_{j=0}^{N-1}\iota_{z,w}\left(\frac{1}{(z-w)^{j+1}}
\right)
c^j(w)}$.
\end{enumerate}\label{Kacsthm}
\end{thm}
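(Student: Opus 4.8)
The plan is to reduce the equivalence of (i) and (ii) to a single expansion formula for the formal delta function together with the decomposition of a commutator of fields into its two contractions. First I would record, for the divided power $\partial_w^{(j)}=\tfrac1{j!}\partial_w^{j}$, the identity
\[
\partial_w^{(j)}\delta(z-w)=\iota_{z,w}\!\left(\frac{1}{(z-w)^{j+1}}\right)-\iota_{w,z}\!\left(\frac{1}{(z-w)^{j+1}}\right),\qquad j\ge 0,
\]
where $\delta(z-w)=\delta(z/w)$ is the distribution of Section 2. For $j=0$ this is proved by splitting $\sum_{n\in\mathbb Z}z^{-n-1}w^{n}$ into its $n\ge 0$ and $n<0$ parts and recognizing them as the two geometric expansions $\iota_{z,w}$ and $-\iota_{w,z}$ of $1/(z-w)$; the general case follows by applying $\partial_w^{(j)}$ and using $\partial_w^{(j)}\iota_{z,w}(z-w)^{-1}=\iota_{z,w}(z-w)^{-j-1}$, and similarly for $\iota_{w,z}$.

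Second, I would use the defining relation \eqnref{contraction}, namely $\lfloor ab\rfloor=[a(z)_-,b(w)]$, together with the symmetry $:\!a(z)b(w)\!:\,=\,:\!b(w)a(z)\!:$ of the normal ordered product from \eqnref{normalorder}, to write
\[
[a(z),b(w)]=\lfloor ab\rfloor-\lfloor ba\rfloor=\lfloor ab\rfloor+[a(z)_+,b(w)].
\]
The structural point is a dichotomy in the degree in $z$: since $a(z)_-=\sum_{m\ge0}a_{(m)}z^{-m-1}$ carries only strictly negative powers of $z$ while $a(z)_+$ carries only non-negative powers, the last display is exactly the splitting of the commutator into its strictly-negative-in-$z$ part $\lfloor ab\rfloor$ and its non-negative-in-$z$ part. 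The same dichotomy holds on the right of the delta identity: $\iota_{z,w}(z-w)^{-j-1}$ carries only strictly negative powers of $z$, whereas $\iota_{w,z}(z-w)^{-j-1}$ carries only non-negative powers.

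The implication (i) $\Rightarrow$ (ii) is then immediate: substituting the delta identity into (i) gives $[a(z),b(w)]=\sum_{j=0}^{N-1}(\iota_{z,w}-\iota_{w,z})(z-w)^{-j-1}c^{j}(w)$, and comparing the strictly-negative-in-$z$ parts (using the dichotomy) yields $\lfloor ab\rfloor=\sum_{j=0}^{N-1}\iota_{z,w}(z-w)^{-j-1}c^{j}(w)$, which is (ii). Observe also that (i), being a finite sum, forces $(z-w)^{N}[a(z),b(w)]=0$, because $(z-w)^{j+1}\partial_w^{(j)}\delta(z-w)=0$; thus (i) already encodes the mutual locality of the pair $a,b$, which I take as the operative hypothesis (it is Kac's standing assumption and the setting in which the theorem is applied in this paper).

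For the converse, and for the coherence of coefficients, I would route everything through residues and the uniqueness of the delta decomposition. Because the non-negative-in-$z$ summand $[a(z)_+,b(w)]$ has vanishing residue in $z$, one has $\operatorname{Res}_z(z-w)^{j}[a(z),b(w)]=\operatorname{Res}_z(z-w)^{j}\lfloor ab\rfloor$; and applying $\operatorname{Res}_z(z-w)^{j}$ to the form in (ii) gives $\operatorname{Res}_z(z-w)^{j}\lfloor ab\rfloor=c^{j}(w)$. Hence the coefficients in (ii) are exactly $c^{j}(w)=\operatorname{Res}_z(z-w)^{j}[a(z),b(w)]$, and under mutual locality the delta-function decomposition theorem provides the unique expansion $[a(z),b(w)]=\sum_{j=0}^{N-1}\partial_w^{(j)}\delta(z-w)\,c^{j}(w)$ with precisely these residue coefficients, which is (i). I expect the main obstacle to be conceptual rather than computational: one must recognize that locality, equivalently $(z-w)^{N}[a(z),b(w)]=0$, is the hypothesis that lets (ii) recover (i) (the field property alone constrains only high positive modes and does not by itself force locality), and one must keep careful track of the two expansion regions $\iota_{z,w}$ and $\iota_{w,z}$ and of the finiteness, guaranteed by $a,b$ being fields, that makes every residue extraction legitimate.
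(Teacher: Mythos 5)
The paper offers no proof of Theorem~\ref{Kacsthm} to compare against: it is quoted in the appendix directly from \cite{MR99f:17033}. Judged on its own, your argument is in substance the standard (Kac's) proof and is correct. Your two pillars are the right ones: the expansion $\partial_w^{(j)}\delta(z-w)=\iota_{z,w}\left((z-w)^{-j-1}\right)-\iota_{w,z}\left((z-w)^{-j-1}\right)$ and the splitting of the commutator into a part carrying only strictly negative powers of $z$, namely $\lfloor ab\rfloor=[a(z)_-,b(w)]$, and a part carrying only non-negative powers, namely $[a(z)_+,b(w)]$. The residue extraction $c^j(w)=\operatorname{Res}_z(z-w)^j[a(z),b(w)]=\operatorname{Res}_z(z-w)^j\lfloor ab\rfloor$ checks out, and you are right --- and it is worth your saying explicitly --- that (ii)$\Rightarrow$(i) genuinely needs mutual locality, which is Kac's standing hypothesis but is suppressed in the transcription above. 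Indeed, as literally stated the converse fails without it: take $a(z)=x$ and $b(w)=\partial_x$ acting on $\mathbb C[x]$, so that $a(z)_-=0$; then (ii) holds with all $c^j=0$, while $[a(z),b(w)]=-1$ is not a finite sum of derivatives of $\delta$ and (i) fails. Your patch --- assuming locality and invoking the decomposition theorem for local distributions with the residue formula for the coefficients --- is exactly how the one-sided version of the statement is repaired.

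One intermediate justification is wrong as stated, though harmlessly so. The normal ordered product is not symmetric: from \eqnref{normalorder} one computes $:a(z)b(w):-:b(w)a(z):=[a(z)_+,b(w)_+]-[a(z)_-,b(w)_-]$, which vanishes under the Wick-type hypotheses in force for the oscillator fields of this paper (condition (2) of Wick's Theorem in the appendix) but not for arbitrary formal distributions; correspondingly $[a(z),b(w)]=\lfloor ab\rfloor-\lfloor ba\rfloor$ is not a general identity (it holds in the paper's Section 3 precisely because the fields there satisfy those hypotheses). Fortunately you never actually use it: the identity your dichotomy argument rests on, $[a(z),b(w)]=\lfloor ab\rfloor+[a(z)_+,b(w)]$, follows directly from $a(z)=a(z)_-+a(z)_+$ and \eqnref{contraction}, with no symmetry claim. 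Strike the detour through $\lfloor ba\rfloor$ and replace it with this one-line decomposition, and your proof is complete.
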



\section{Acknowledgement}
The first two authors would like to thank The College of Charleston mathematics department for summer research support during  this project.
The third author would like to thank the other two authors and the College of Charleston for the hospitality during his visit in 2014. The third author was partially supported by FAPESP grant (2012/02459-8).

\bibliographystyle{alpha}

\def\cprime{$'$} \def\cprime{$'$} \def\cprime{$'$}

 \end{document}